\documentclass[12pt,oneside]{amsart}

\usepackage{amssymb,amsxtra,mathtools}

\usepackage[
  paper=a4paper,
  headsep=20pt,
  footskip=18pt,
  text={160mm,240mm},
  centering,
  includehead
]{geometry}

\usepackage{newtxtext,newtxmath}

\usepackage[bookmarks,bookmarksdepth=2,pdfencoding=auto,psdextra]{hyperref}
\usepackage{enumitem}
\setlist[enumerate,1]{label=(\arabic*),font=\upshape}
\usepackage[capitalize,noabbrev]{cleveref}

\crefname{enumi}{}{}
\crefname{equation}{}{}
\usepackage{keytheorems}
\usepackage{graphicx,xcolor}
\usepackage{tikz}
\usetikzlibrary{cd,calc,arrows,decorations.pathreplacing}
\tikzcdset{arrow style=math font}
\usetikzlibrary{arrows,spath3,intersections,nfold}
\makeatletter
\tikzset{
  offset now/.code=
    \pgfgetpath\tikz@temp\pgfsetpath\pgfutil@empty\pgfoffsetpath\tikz@temp{#1}
}
\makeatother

\iftrue
  \makeatletter
  \def\@settitle{%
    \begin{flushleft}%
      \LARGE\bfseries
      \strut\@title\strut
    \end{flushleft}%
  }
  \def\@setauthors{%
    \begingroup
    \def\thanks{\protect\thanks@warning}%
    \trivlist
    \raggedright
    \large \@topsep30\p@\relax
    \advance\@topsep by -\baselineskip
    \item\relax
    \author@andify\authors
    \def\\{\protect\linebreak}%
    \authors
    \ifx\@empty\contribs
    \else
    ,\penalty-3 \space \@setcontribs
    \@closetoccontribs
    \fi
    \normalfont
    \endtrivlist
    \endgroup
  }
  \def\@setaddresses{\par
    \nobreak \begingroup
    \small\raggedright
    \def\author##1{\nobreak\addvspace\smallskipamount}%
    \def\\{\unskip, \ignorespaces}%
    \interlinepenalty\@M
    \def\address##1##2{\begingroup
      \par\addvspace\bigskipamount\noindent
      \@ifnotempty{##1}{(\ignorespaces##1\unskip) }%
      {\ignorespaces##2}\par\endgroup}%
    \def\curraddr##1##2{\begingroup
      \@ifnotempty{##2}{\nobreak\noindent\curraddrname
        \@ifnotempty{##1}{, \ignorespaces##1\unskip}\/:\space
        ##2\par}\endgroup}%
    \def\email##1##2{\begingroup
      \@ifnotempty{##2}{\nobreak\noindent E-mail address%
        \@ifnotempty{##1}{, \ignorespaces##1\unskip}\/:\space
        \ttfamily##2\par}\endgroup}%
    \def\urladdr##1##2{\begingroup
      \def~{\char`\~}%
      \@ifnotempty{##2}{\nobreak\noindent\urladdrname
        \@ifnotempty{##1}{, \ignorespaces##1\unskip}\/:\space
        \ttfamily##2\par}\endgroup}%
    \addresses
    \endgroup
    \global\let\addresses=\@empty
  }
  \def\@setabstracta{%
    \ifvoid\abstractbox
    \else
    \skip@30pt \advance\skip@-\lastskip
    \advance\skip@-\baselineskip \vskip\skip@
    \box\abstractbox
    \prevdepth\z@ 
    \vskip-20pt
    \fi
  }
  \renewenvironment{abstract}{%
    \ifx\maketitle\relax
    \ClassWarning{\@classname}{Abstract should precede
      \protect\maketitle\space in AMS document classes; reported}%
    \fi
    \global\setbox\abstractbox=\vtop \bgroup
    \normalfont\small
    \list{}{\labelwidth\z@
      \leftmargin0pc \rightmargin\leftmargin
      \listparindent\normalparindent \itemindent\z@
      \parsep\z@ \@plus\p@
      
    }%
  \item[\hskip\labelsep\bfseries\abstractname.]%
  }{%
    \endlist\egroup
    \ifx\@setabstract\relax \@setabstracta \fi
  }

  \def\ps@headings{\ps@empty
    \def\@evenhead{%
      \setTrue{runhead}%
      \normalfont\scriptsize
      \rlap{\thepage}\hfill
      \def\thanks{\protect\thanks@warning}%
      \leftmark{}{}}%
    \def\@oddhead{%
      \setTrue{runhead}%
      \normalfont\scriptsize
      \def\thanks{\protect\thanks@warning}%
      \rightmark{}{}\hfill \llap{\thepage}}%
    \let\@mkboth\markboth
  }\ps@headings

  \expandafter\def\csname section\endcsname{\@startsection{section}{1}%
    \z@{-1.4\linespacing\@plus-.5\linespacing}{.8\linespacing\@plus.3\linespacing}%
    {\normalfont\bfseries\Large}}
  \expandafter\def\csname subsection\endcsname{\@startsection{subsection}{2}%
    \z@{-.8\linespacing\@plus-.3\linespacing}{.5\linespacing\@plus.3\linespacing}%
    {\normalfont\bfseries\large}}
  \expandafter\def\csname subsubsection\endcsname{\@startsection{subsubsection}{3}%
    \z@{-.7\linespacing\@plus-.2\linespacing}{.3\linespacing\@plus.2\linespacing}%
    {\normalfont\bfseries\normalsize}}
  \expandafter\def\csname paragraph\endcsname{\@startsection{paragraph}{4}%
    \z@{.7\linespacing\@plus.2\linespacing}{-1.5ex}%
    {\normalfont\itshape}}
  \def\@secnumfont{\bfseries}

  \renewcommand\contentsnamefont{\bfseries}
  \def\@starttoc#1#2{\begingroup
    \setTrue{#1}%
    \par\removelastskip\vskip\z@skip
    \@startsection{}\@M\z@{\linespacing\@plus\linespacing}%
    {.5\linespacing}{
      \contentsnamefont}{#2}%
    \ifx\contentsname#2%
    \else \addcontentsline{toc}{section}{#2}\fi
    \makeatletter
    \@input{\jobname.#1}%
    \if@filesw
    \@xp\newwrite\csname tf@#1\endcsname
    \immediate\@xp\openout\csname tf@#1\endcsname \jobname.#1\relax
    \fi
    \global\@nobreakfalse \endgroup
    \addvspace{32\p@\@plus14\p@}%
    \let\tableofcontents\rela\x
  }
  \def\contentsname{Contents}
  \def\l@section{\@tocline{2}{.5ex}{0mm}{5pc}{}}
  \def\l@subsection{\@tocline{2}{0pt}{2em}{5pc}{}}
  \makeatother

\fi

\def\to{\mathchoice{\longrightarrow}{\rightarrow}{\rightarrow}{\rightarrow}}
\makeatletter
\newcommand{\shortxra}[2][]{\ext@arrow 0359\rightarrowfill@{#1}{#2}}
\def\longrightarrowfill@{\arrowfill@\relbar\relbar\longrightarrow}
\newcommand{\longxra}[2][]{\ext@arrow 0359\longrightarrowfill@{#1}{#2}}

\makeatother

\makeatletter
\def\addtagsub#1{\let\oldtf=\tagform@\def\tagform@##1{\oldtf{##1}\hbox{$_{#1}$}}}
\makeatother

\makeatletter
\def\Nopagebreak{\@nobreaktrue\nopagebreak}
\makeatother

\foreach \n in 
{Z,Q,R,C}
{\expandafter\xdef\csname\n\endcsname{\noexpand\mathbb{\n}}}

\foreach \n in 
{A,B,F,S}
{\expandafter\xdef\csname b\n\endcsname{\noexpand\mathbb{\n}}}

\foreach \n in 
{A,...,Z}
{\expandafter\xdef\csname c\n\endcsname{\noexpand\mathcal{\n}}}

\foreach \n in 
{dim,rk,rank,sign,sgn,Ker,Coker,Im,Hom,End,GL,SO,
 Tor,Ext,Gal,Irr,dis,Wh,Hopf,tr,Tr,vol,colim,grk,diam,
 lk,st,Mod,Aut,Map,SL,Dax,dax,FQ,Homeo,Diff,Emb,ks,
 CAT,TOP,DIFF,PL,O}
{\expandafter\xdef\csname\n\endcsname{\noexpand\operatorname{\n}}}

\foreach \n/\s in 
{inte/int}
{\expandafter\xdef\csname\n\endcsname{\noexpand\operatorname{\s}}}

\foreach \n in 
{id,pr,PI,Pr,ad}
{\expandafter\xdef\csname\n\endcsname{\noexpand\mathrm{\n}}}

\foreach \n/\s in 
{csum/\#,bcsum/\natural/}
{\expandafter\xdef\csname\n\endcsname{\noexpand\mathbin{\s}}}

\def\rel{\ifmmode\;{\mathgroup\symoperators rel}\;\else rel\ \fi}
\def\sm{\smallsetminus}

\newkeytheorem{theoremalpha}[
  name=Theorem,refname={Theorem,Theorems},counter-format=\Alph*
]
\newkeytheorem{corollaryalpha}[
  name=Corollary,refname={Corollary,Corollaries},sibling=theoremalpha
]
\newkeytheorem{conjecturealpha}[name=Conjecture,counter-format=\Alph*]
\newkeytheorem{theorem}[numberwithin=section]
\newkeytheorem{
  question,corollary,proposition,lemma,conjecture,observation
}[sibling=theorem]
\newkeytheorem{
  definition,example,remark,notation,case
}[style=definition,sibling=theorem]
\newkeytheorem{assertion,claim}[numbered=no]
\newkeytheorem{question*}[numbered=no,name=Question]
\newkeytheorem{convention}[style=definition,numbered=no]

\newkeytheoremstyle{style-theorem-named}{
  noteseparator={},notefont=\bfseries,notebraces={}{},headformat={\NOTE}
}
\newkeytheorem{theorem-named}[style=style-theorem-named]

\newkeytheoremstyle{style-definition-named}{
  inherit-style=definition,
  noteseparator={},notefont=\bfseries,notebraces={}{},headformat={\NOTE}
}
\newkeytheorem{definition-named,conjecture-named}[style=style-definition-named]

\numberwithin{equation}{section}

\def\labelignored#1{\ignorespaces}

\begin{document}

\title
{A homological approach to Dax invariants}

\author{Jae Choon Cha}
\address{
  Center for Research in Topology and Department of Mathematics\\%
  POSTECH\\%
  Pohang Gyeongbuk 37673\\%
  Republic of Korea
}
\email{jccha@postech.ac.kr}



\begin{abstract}
  We present a homological construction of Dax invariants in 4-manifolds that is intrinsic to the topological category and requires neither smoothing nor generic representatives.
  Applications include an extension of the invariant of Cha--Kim to neat disks without assuming local flatness, and a topological definition of the Dax map for arc spaces that yields a short alternative proof of the topological Dax isomorphism theorem.
\end{abstract}

\maketitle

\section{Introduction}

The Dax invariant is a useful tool in the study of isotopy problems in 4-dimensional topology.
It originates in Dax's study of smooth embedding spaces in the metastable range~\cite{Dax:1972-1}, and Gabai's work on smooth isotopy of disks~\cite{Gabai:2021-1} initiated recent applications in dimension~4.
For developments and applications in the smooth category, see, in particular, the work of Kosanovi\'c and Teichner \cite{Kosanovic-Teichner:2024-2}, Kosanovi\'c \cite{Kosanovic:2024-1}, Schwartz \cite{Schwartz:2021-1} and Lin, Wu, Xie and Zhang \cite{Lin-Wu-Xie-Zhang:2026-1}.
While smoothness is essential in the original construction, topological versions of the invariant have been developed in~\cite{Cha-Kim:2023-1,Lin-Xie-Zhang:2026-2}.
We review these smooth and topological developments below.

In this paper, we develop a new \emph{homological approach} to Dax invariants in 4-manifolds.
Our definitions and proofs of invariance are intrinsic to the topological category and do not use smooth techniques.

In particular, our method gives a topological Dax invariant for embedded disks without assuming local flatness.
Throughout this paper, unless stated otherwise, embedded disks, surfaces and their isotopies are not assumed to be locally flat.
In particular, an isotopy means a continuous family of embeddings.
A locally flat isotopy is understood in the sense of Edwards and Kirby~\cite{Edwards-Kirby:1971-1}.
All isotopies are rel boundary, i.e., they fix the boundary pointwise.

For neat disks $D_0$ and $D_1$ that are homotopic rel $\partial$ in an oriented 4-manifold~$M$, we define the Dax invariant $\Dax(D_0,D_1)$ in \cref{subsection:dax-for-disk-definition-properties}.
We prove the isotopy invariance of our invariant in~\cref{theorem:dax-disk-isotopy}, and prove that it agrees with the invariant in~\cite{Cha-Kim:2023-1} for locally flat disks in \cref{theorem:dax-disk-topological}.

\begin{theorem-named}[\cref{theorem:dax-disk-isotopy}]
  The value of $\Dax(D_0,D_1)$ is invariant under isotopy of~$D_0$ and~$D_1$.
  If $D_0$ and $D_1$ are isotopic, then $\Dax(D_0,D_1)=0$.
\end{theorem-named}

See \cref{section:dax-for-disks} for further details on the invariants for disks.

In \cref{corollary:disk-isotopy-implies-locally-flat-isotopy} below, we record a consequence of the invariance under non-locally flat isotopy together with earlier results in~\cite{Cha-Kim:2023-1}.
We say that a neat disk $D\subset M$ has a \emph{boundary geometric dual} if there is a map $G\colon S^2\to \partial M$ which intersects $\partial D$ at exactly one point, transversely.
We remark that $G$ can be assumed to be an embedding by the loop theorem~\cite[Lemma~2.7]{Cha-Kim:2023-1}.

\begin{theorem-named}[\cref{corollary:disk-isotopy-implies-locally-flat-isotopy}]
  Let $M$ be an orientable topological 4-manifold with nonempty boundary.
  For locally flat neat disks in $M$ admitting a boundary geometric dual, isotopy implies locally flat isotopy.
  If $M$ is smooth, then for smooth neat disks in $M$ admitting a geometric dual in the boundary, topological isotopy implies smooth isotopy.
\end{theorem-named}

Another consequence concerns \emph{local knotting}.
In general, local knotting produces surfaces which are isotopic but not necessarily ambient isotopic;
in many cases it can be detected by the fundamental group of the complement.
In both smooth and topological locally flat categories, \cref{corollary:local-knotting-implies-locally-flat-isotopy} shows that local knotting on any connected neat surface, possibly nonorientable, in an orientable 4-manifold produces ambient isotopic surfaces if the surface has a boundary geometric dual.

\paragraph{Previous developments of the Dax invariant}

In the smooth formulation in~\cite{Dax:1972-1,Gabai:2021-1,Kosanovic-Teichner:2024-2,Cha-Kim:2023-1}, the Dax invariant is essentially a $\pi_1$-labeled signed count of self-intersection double points.
In this formulation, we consider a map $H\colon I^3 \to M$ satisfying a certain smooth boundary condition.
(Specifically, one can use a smooth analog of our boundary condition in \cref{section:homological-dax}.)
In particular, a null-homotopy of a loop of arc embeddings in the space of arc maps and a homotopy between embedded disks give such a map~$H$.
A key ingredient is Dax's perturbation \cite[Chapters III and~VII]{Dax:1972-1}, which is built on Thom's multijet transversality.
It homotopes $H$ rel $\partial I^3$ so that the associated \emph{track} $\overline H\colon I^3 \to M\times I^2$ defined by $\overline H(s,t,u)=(H(s,t,u),t,u)$ is a self-transverse smooth generic map.
The invariant~$\dax(H)\in \Z[\pi_1(M)\sm \{1\}]$ is defined to be the signed count of double points with nontrivial $\pi_1$-labels.
Ordering two pre-image points by their first coordinates determines the signs and $\pi_1$-labels without Wall's sheet-ordering ambiguity.
See our \cref{lemma:dax-for-generic-track} for the defining formula.
Another key ingredient is the invariance of $\dax(H)$ under homotopy of~$H$ rel~$\partial I^3$, which is proved by relating two perturbations using singularity theory for smooth maps.
In particular, both the Dax construction and the proof of homotopy invariance rely on smoothness.

A generalization to the topological category was introduced in~\cite{Cha-Kim:2023-1}, which defined the Dax invariant of homotopic locally flat disks.
They use their stable smoothing theorem for locally flat disks~\cite[Theorem~D]{Cha-Kim:2023-1} so that the smooth invariant can be applied.
For a topological 4-manifold, they use an almost smooth structure, meaning a smooth structure of the ambient 4-manifold with a singular point.
They proved that the value of the smooth invariant is independent of the choice of the stable disk smoothing, yielding a well-defined topological invariant of disks.
Independence from the chosen almost smooth structure was not settled in~\cite{Cha-Kim:2023-1}.

Recently, in \cite{Lin-Xie-Zhang:2026-2}, Lin, Xie and Zhang studied the Dax isomorphism for the Dax group $\Ker\{ \pi_1\Emb(I,M) \to \pi_1\Map(I,M) \}$, whose smooth version was presented earlier in~\cite[Theorem~3.7]{Gabai:2021-1}.
(See \cref{subsection:application-dax-isomorphism} for a precise definition of the arc spaces involved.)
For smooth 4-manifolds $M$, they showed that the topological and smooth Dax groups are isomorphic by smoothing families of arcs, and showed that the Dax isomorphism is independent of the choice of the smooth structure, using an interpretation via configuration space models from embedding calculus.
Using this, they obtained the Dax isomorphism theorem for topological 4-manifolds~\cite[Theorem~1.1]{Lin-Xie-Zhang:2026-2}.

Applications of the topological Dax invariant include isotopy smoothing for disks~\cite{Cha-Kim:2023-1}, and more generally for arbitrary surfaces~\cite{Cha-Kim:2026-1}, under a geometric dual hypothesis.
In~\cite{Cha:2026-1}, the topological Dax invariant is used to study topological mapping class groups and Torelli groups of a broad range of 4-manifolds, including various non-smoothable 4-manifolds.

\paragraph{Our homological approach}

A key ingredient we develop is a homological definition of the Dax invariant $\dax(H)$ in the topological category, without using smoothing or Dax's smooth perturbation.
Our definition applies directly to maps $H\colon I^3\to M$ whose track $\overline H\colon I^3 \to M\times I^2$ need not be self-transverse and may have infinitely many ``double points'' in general.
We show that the associated $\pi_1$-labels form a finite set, and the ordered pre-image pairs with distinct non-identity labels $g\in \pi_1(M)\sm \{1\}$ are separated by pairwise disjoint codimension-zero compact submanifolds $V_g \subset I^3\times I^3$ with boundary;
the contribution of the ordered pre-image pairs with $\pi_1$-label $g\ne 1$ is defined by taking the homological intersection of the image of the fundamental class of~$V_g$ with the diagonal in $(M\times I^2)\times (M\times I^2)$.
For details, see \cref{subsection:homological-definition}, especially \cref{definition:dax-intersection-dbint}.

We give a topological proof that $\dax(H)$ is invariant under homotopy rel~$\partial I^3$, by showing that sufficiently close maps agreeing on $\partial I^3$ have the same value of~$\dax(H)$ in \cref{subsection:homological-definition}.
We also establish useful properties of $\dax(H)$ in \cref{subsection:basic-properties} and define the Dax homomorphism of $\pi_3(M)$ in the topological category in \cref{subsection:dax-homomorphism-pi_3}.

For the special case of maps with topologically self-transverse tracks, we discuss how to compute the invariant from double points in \cref{subsection:computation-generic-track}.
From this, it follows that our topological definition generalizes the previous smooth approaches.

We consider the case of loops of arc embeddings in \cref{subsection:application-dax-isomorphism}.
We show that our homological approach also gives an intrinsic topological definition of the Dax isomorphism, yielding a short alternative proof of the result in~\cite{Lin-Xie-Zhang:2026-2} that the smooth Dax isomorphism generalizes to the topological category.

We discuss applications to invariants of disks and their isotopies in 4-manifolds in \cref{section:dax-for-disks}.
For locally flat disks, \cref{theorem:dax-disk-topological} identifies our invariant with the invariant in~\cite{Cha-Kim:2023-1}.
It follows that the latter is independent of the choice of an almost smooth structure.
The formulation of the disk invariant uses a symmetry property introduced in~\cite{Kosanovic-Teichner:2024-2} in the smooth category.
We prove a topological version in \cref{subsection:symmetry}.

\subsubsection*{Acknowledgements}

The author thanks Jianfeng Lin, Yi Xie and Boyu Zhang for discussions that motivated him to revisit the topological Dax invariant.
This work was supported by the National Research Foundation grant RS-2019-NR039996.
The author used AI tools to improve the language and readability of the manuscript.
The author takes full responsibility for its content.

\section{Topological Dax invariant}
\label{section:homological-dax}

Let $M$ be an oriented 4-manifold with nonempty boundary.
Fix two distinct points $z_0$,~$z_1 \in \partial M$.
We use $z_0$ as the basepoint.

We always assume that a map $H\colon I^3 \to M$ satisfies $H(s\times I^2)=\{z_s\}$ for $s=0,1$.
Define the \emph{track} $\overline H\colon I^3 \to M\times I^2$ by $\overline H(s,\tau)=(H(s,\tau),\tau)$ for $(s,\tau)\in I\times I^2$.
We have $\overline H(\partial I^3) \subset \partial(M\times I^2)$.
The goal of this section is to define a homological invariant generalizing Dax's self-intersection~\cite{Dax:1972-1,Gabai:2021-1,Kosanovic-Teichner:2024-1,Kosanovic-Teichner:2024-2,Cha-Kim:2023-1} for maps $H\colon I^3 \to M$ satisfying the following condition.

\def\DB{\textup{(D)}}\def\DBint{\textup{(D$_\mathrm{int}$)}}
\begin{enumerate}[label={\DB}]
  \item\label{item:dax-boundary-condition}
  $\overline H$ embeds $\partial I^3$ into $\partial(M\times I^2)$,
  i.e., $s\mapsto H(s,\tau)$ is injective for all $\tau\in\partial I^2$.
\end{enumerate}

We first consider the special case of maps satisfying the following stronger condition:

\begin{enumerate}[label={\DBint}]
  \item\label{item:dax-boundary-interior-condition}
  $\overline H$ fails to be injective only at interior points of $M\times I^2$, i.e., $p=H(s_0,\tau)=H(s_1,\tau)$ with $s_0 \ne s_1$ implies $p\in \inte(M)$ and $\tau\in \inte(I^2)$.
\end{enumerate}

Note that \cref{item:dax-boundary-interior-condition} implies~\cref{item:dax-boundary-condition}.
Also, if $H$ sends $\inte(I^3)$ to $\inte(M)$, then \cref{item:dax-boundary-condition} implies \cref{item:dax-boundary-interior-condition}.
The following perturbations are useful in our arguments.

\begin{lemma}
  \label{lemma:interior-perturb}
  \begin{enumerate}
    \item \label{item:map-interior-perturb}
    A map $H$ satisfying \cref{item:dax-boundary-condition} can be perturbed, by an arbitrarily small homotopy rel $\partial I^3$, to a map sending $\inte(I^3)$ to $\inte(M)$, and thus satisfying \cref{item:dax-boundary-interior-condition}.
    \item \label{item:homotopy-interior-perturb}
    A homotopy $\{H_v\}_{0\le v\le 1}$ rel $\partial I^3$ between maps $H_0$ and $H_1$ satisfying \cref{item:dax-boundary-interior-condition} can be perturbed, by an arbitrarily small homotopy rel $\partial I^4$, to a homotopy rel $\partial I^3$ between $H_0$ and $H_1$ through maps satisfying~\cref{item:dax-boundary-interior-condition}.
  \end{enumerate}
\end{lemma}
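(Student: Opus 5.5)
The plan is to push maps off $\partial M$ using a collar of $\partial M$ in $M$. I will choose a collar $c\colon \partial M\times[0,1]\to M$ with $c(x,0)=x$, and a family of maps $\phi_\epsilon\colon M\to M$ for $\epsilon>0$, each supported in the collar and $\epsilon$-close to the identity, such that $\phi_\epsilon(M)\subset \inte(M)\cup \{$nothing$\}$ away from a controlled region. Concretely, $\phi_\epsilon$ is defined on the collar by $c(x,r)\mapsto c(x,\epsilon+(1-\epsilon)r)$ and is the identity outside it; then $\phi_\epsilon(M)\subset\inte(M)$. The obstruction is that we need the homotopy to be rel $\partial I^3$. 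So I will interpolate using a bump function $\rho\colon I^3\to[0,1]$ with $\rho^{-1}(0)=\partial I^3$, and set
\[
H'(p)=\phi_{\epsilon\rho(p)}\bigl(H(p)\bigr),
\]
with $\phi_0=\id$. For $p\in\inte(I^3)$ we have $\rho(p)>0$, so $H'(p)\in\inte(M)$. Also $H'=H$ on $\partial I^3$, and the straight-line homotopy in the parameter, $v\mapsto\phi_{v\epsilon\rho}\circ H$, is rel $\partial I^3$ and $\epsilon$-small. The boundary condition $H(s\times I^2)=\{z_s\}$ is preserved, since $s\times I^2\subset\partial I^3$.

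It remains to see that $H'$ satisfies \DBint. Condition \DB{} holds for $H'$ because it only involves $\partial I^3$, where $H'=H$. Since $H'(\inte I^3)\subset\inte(M)$, the remark before the lemma applies: a coincidence $H'(s_0,\tau)=H'(s_1,\tau)$ with $s_0\neq s_1$ has two cases. If both points lie in $\partial I^3$, then $\tau\in\partial I^2$ (because $s_0\ne s_1$ rules out both $s_i\in\{0,1\}$ with the same image, as $z_0\ne z_1$), and injectivity from \DB{} excludes it. If one point is interior, then $p\in\inte(M)$. Then $\tau\in\inte(I^2)$: if $\tau\in\partial I^2$, both points would be boundary points and injectivity would apply. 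This proves \cref{item:map-interior-perturb}.

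For \cref{item:homotopy-interior-perturb}, I regard the homotopy as a map $K\colon I^3\times I\to M$ and apply the same construction with a bump function $\rho$ on $I^4$ vanishing exactly on $\partial I^4$. This gives $K'(p,v)=\phi_{\epsilon\rho(p,v)}(K(p,v))$, which agrees with $K$ on $\partial I^4$. In particular $K'_0=H_0$, $K'_1=H_1$, and each $K'_v$ is rel $\partial I^3$. For $0<v<1$, every interior point of $I^3$ maps into $\inte(M)$, and $K'_v=H_0|_{\partial I^3}$ on $\partial I^3$, which satisfies \DB{}. So each $K'_v$ satisfies \DBint{} by the same argument, and the ends already satisfy it by hypothesis. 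Smallness again comes from $\epsilon$.

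The main point to check carefully is that $\phi_\epsilon$ depends continuously on $\epsilon\in[0,1]$, including at $\epsilon=0$. This holds from the explicit collar formula, which is jointly continuous in $(x,r,\epsilon)$ and equals the identity at $\epsilon=0$. Beyond that, the only place the argument needs care is the case analysis showing interiority of $\tau$.
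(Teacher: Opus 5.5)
Your proposal is correct and follows essentially the same route as the paper. The paper takes a small isotopy $f_t$ of $M$ into $\inte(M)$, which you realize concretely as a collar push, and reparametrizes it by a function vanishing exactly on $\partial I^3$ (resp.\ $\partial I^4$; the paper uses $v(1-v)\lambda(x)$). It then uses the observation that (D) together with $H(\inte(I^3))\subset\inte(M)$ implies (D$_{\mathrm{int}}$), which you verify directly by your case analysis.
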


\begin{proof}
  Choose an arbitrarily small isotopy $\{f_t\colon M\to M\}_{0\le t\le 1}$ such that $f_0=\id_M$ and $f_t(M)\subset\inte(M)$ for $t \in(0,1]$.
  Choose a map $\lambda\colon I^3\to[0,1]$ such that $\lambda^{-1}(0)=\partial I^3$.

  To show \cref{item:map-interior-perturb}, define $H_v(x)=f_{v\lambda(x)}(H(x))$, $v\in I$.
  This is a homotopy rel~$\partial I^3$ with $H_0=H$ and $H_1(\inte(I^3))\subset\inte(M)$.
  Since $H_1$ satisfies~\cref{item:dax-boundary-condition}, $H_1$ also satisfies~\cref{item:dax-boundary-interior-condition}.

  To shoew \cref{item:homotopy-interior-perturb}, define $K_{v,w}(x)=f_{wv(1-v)\lambda(x)}(H_v(x))$, $w\in I$, and let $F_v(x)=K_{v,1}(x)$.
  Viewed as a homotopy of maps $I^3\times I\to M$ with parameter $w$, $\{K_{v,w}\}_{w\in I}$ is rel~$\partial I^4$.
  Also, $\{F_v\}_{v\in I}$ is a homotopy rel~$\partial I^3$ from $H_0$ to~$H_1$.
  For $v\in(0,1)$, we have $F_v(\inte(I^3))\subset\inte(M)$, and $\overline F_v|_{\partial I^3}=\overline H_v|_{\partial I^3}$ is an embedding into $\partial(M\times I^2)$.
  Thus $F_v$ satisfies~\cref{item:dax-boundary-interior-condition} for $v\in(0,1)$, and also for $v=0,1$ by the hypothesis.
\end{proof}

\subsection{Homological definition of the Dax self-intersection}
\label{subsection:homological-definition}

Let $s\colon I^3 \to I$ and $\tau\colon I^3 \to I^2$ be the projections defined by $x=(s(x),\tau(x))$ for $x\in I^3$.
Let $P=\{(x,y)\in I^3\times I^3:s(x)\le s(y)\}$, and $\Delta\subset (M\times I^2)^2$ be the diagonal.

Suppose that $H\colon I^3\to M$ is a map satisfying~\cref{item:dax-boundary-interior-condition}.
Let
\[
  \Phi_H = (\overline H \times \overline H) |_{P} \colon P \to (M\times I^2)^2.
\]
Note that $\Phi_H^{-1}(\Delta) = \{(x,y)\in P \mid \overline H(x)=\overline H(y)\}$.
Define $g_H\colon \Phi_H^{-1}(\Delta) \to \pi_1(M)$ by
\[
  g_H(x,y) = \bigl[ H|_{[0,s(x)]\times\tau(x)}\cdot (H|_{[0,s(y)]\times\tau(y)})^{-1} \bigr] \in \pi_1(M) = \pi_1(M,z_0).
\]
Since sufficiently close based loops in a manifold are homotopic rel basepoint, $g_H$ is locally constant.
Since $\Phi_H^{-1}(\Delta)$ is compact, it follows that $g_H(\Phi_H^{-1}(\Delta))$ is finite.
It also follows that for all $g\in\pi_1(M)$, $g_H^{-1}(g)$ is closed, and thus compact.

Let $G_H = g_H(\Phi_H^{-1}(\Delta))\sm \{1\}$, the set of nontrivial $\pi_1$ elements associated with ``double points'' of~$\overline H$.
Note that $g_H^{-1}(g) \subset \inte(P)$ for $g\in G_H$, since \cref{item:dax-boundary-interior-condition} holds and $g_H(x,y)=1$ if $s(x)=s(y)$.

We claim that, for $g\ne 1$, there exist pairwise disjoint compact codimension-zero submanifolds $V_g \subset \inte(P)$ with boundary such that $V_g\cap \Phi_H^{-1}(\Delta) = g_H^{-1}(g) \subset\inte(V_g)$.
Note that this allows $V_g=\emptyset$ for $g\notin G_H$.
In fact, setting $V_g=\emptyset$ for $g\notin G_H$, it suffices to choose $V_g$ for $g\in G_H$.
Using normality and the finiteness of $G_H$, choose pairwise disjoint open sets $U_g \subset \inte(P)$ such that $U_g \cap \Phi_H^{-1}(\Delta) = g_H^{-1}(g)$.
Cover the compact set $g_H^{-1}(g)$ by finitely many linear 6-simplices in~$U_g$.
Take $V_g$ to be a small closed PL regular neighborhood of the union of the simplices in~$U_g$.

Let $u_\Delta\in H^6((M\times I^2)^2, (M\times I^2)^2\sm\Delta)$ be the Thom class for $\Delta$ equipped with the orientation of $M\times I^2\cong \Delta$, so that its evaluation on a homology class is the algebraic intersection with~$\Delta$.
Let $[V_g] \in H_6(V_g,\partial V_g)$ be the fundamental class induced by the orientation of $I^3\times I^3$.
Define
\[
  n(g) = - \bigl\langle u_\Delta, (\Phi_H|_{V_g})_*[V_g] \bigr\rangle\in\Z.
\]
We remark that for oriented sheets $\R^3\times0$ and $0\times\R^3$ in $\R^6$ with intersection $+1$, the product $\R^3\times0\times 0\times\R^3$ intersects the oriented diagonal of $\R^6\times\R^6$ with sign~$-1$;
the sign convention for $n(g)$ is chosen so that such a positive self-intersection of $\overline H$ contributes~$+1$.

\begin{lemma}
  The integer $n(g)$ is independent of the choice of~$V_g$.
\end{lemma}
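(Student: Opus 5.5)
Given two admissible choices $V_g$ and $V'_g$, I will compare both with a third choice that lies inside both. The compact set $K=g_H^{-1}(g)$ lies in the open set $\inte(V_g)\cap\inte(V'_g)$. Hence the construction above, using finitely many linear 6-simplices and a small closed PL regular neighborhood, gives a compact codimension-zero submanifold $W$ with
\[
K\subset\inte(W)\subset W\subset \inte(V_g)\cap\inte(V'_g).
\]
Since $W\cap\Phi_H^{-1}(\Delta)=K$, the set $W$ is itself an admissible choice. It therefore suffices to prove the following: if $W\subset\inte(V)$ are two admissible choices (with $V=V_g$ or $V'_g$), then they give the same value of $n(g)$.

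To compare $V$ and $W$, I will use excision in the target pair. Admissibility of $V$ gives $V\cap\Phi_H^{-1}(\Delta)=K\subset\inte(W)$. Hence $\Phi_H$ maps $V\sm\inte(W)$ into $(M\times I^2)^2\sm\Delta$, and in particular maps $\partial V$ and $\partial W$ there. So both $(\Phi_H|_V)_*[V]$ and $(\Phi_H|_W)_*[W]$ are defined in $H_6((M\times I^2)^2,(M\times I^2)^2\sm\Delta)$, and it is enough to show that they agree. Consider the map of pairs
\[
(W,\partial W)\hookrightarrow (V, V\sm\inte(W)).
\]
Also consider the map $(V,\partial V)\to(V,V\sm\inte(W))$. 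Both pairs map under $\Phi_H$ into $\bigl((M\times I^2)^2,(M\times I^2)^2\sm\Delta\bigr)$. So it suffices to show that $[V]$ and $[W]$ have the same image in $H_6(V,V\sm\inte(W))$.

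This equality is the standard compatibility of fundamental classes of oriented manifolds with boundary. By excision, $H_6(V,V\sm\inte(W))\cong H_6(W,\partial W)$; here one uses a collar of $\partial W$ in $V\sm\inte(W)$ to make the excision legitimate. Under this isomorphism, the image of $[V]$ is characterized by its restrictions to the local groups $H_6(V,V\sm x)$ for $x\in\inte(W)$. These restrictions are the local orientation classes induced by the orientation of $I^3\times I^3$. The same holds for $[W]$. Since $W$ is connected componentwise, or at least every component of $W$ contains an interior point where this local comparison applies, the two classes coincide. Naturality of the Kronecker pairing with $u_\Delta$ then gives equal values of $n(g)$.

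The only step needing care is the excision identification with boundaries present: we need $V\sm\inte(W)$ to deformation retract appropriately, or excision to apply, so that $H_6(V,V\sm\inte(W))\cong H_6(W,\partial W)$. I will handle this by choosing $W$ as a PL regular neighborhood, which is a PL submanifold with a collared boundary in $\inte(V)$. Then the inclusion $(W,\partial W)\to(V,V\sm\inte(W))$ is an excision up to homotopy, by thickening $\partial W$ with an external collar.
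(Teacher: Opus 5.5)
Your proposal is correct and follows the paper's argument: shrink to a common compact codimension-zero submanifold $W_g$ with $g_H^{-1}(g)\subset\inte(W_g)\subset W_g\subset\inte(V_g)\cap\inte(V'_g)$, note that $\Phi_H$ avoids $\Delta$ on $V_g\sm\inte(W_g)$ and $V'_g\sm\inte(W_g)$, and compare the pairings by excision. The only difference is that you spell out the fundamental-class compatibility in $H_6(V,V\sm\inte(W))$, including the collar needed for excision, which the paper compresses into the single word ``excision''.
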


\begin{proof}
  Let $V_g$ and $V'_g$ be two choices.
  Choose a compact codimension-zero submanifold $W_g$ with boundary such that $g_H^{-1}(g)\subset\inte(W_g) \subset W_g\subset\inte(V_g)\cap\inte(V'_g)$.
  Since $\Phi_H$ avoids $\Delta$ on $V_g\sm\inte(W_g)$ and $V'_g\sm\inte(W_g)$, excision gives
  \[
    \bigl\langle u_\Delta, (\Phi_H|_{V_g})_*[V_g] \bigr\rangle
    = \bigl\langle u_\Delta, (\Phi_H|_{W_g})_*[W_g] \bigr\rangle
    = \bigl\langle u_\Delta, (\Phi_H|_{V'_g})_*[V'_g] \bigr\rangle.
    \qedhere
  \]
\end{proof}

\begin{definition}
  \label{definition:dax-intersection-dbint}
  Define the \emph{homological Dax self-intersection} of $H$ by
  \[
    \dax(H)=\sum_{g\ne 1}n(g)\cdot g\in\Z[\pi_1(M)\sm1].
  \]
\end{definition}

Note that, if $g\notin G_H$, then $n(g)=0$, even when $V_g$ is nonempty, since $\Phi_H(V_g)$ avoids~$\Delta$.
Thus the above sum has only finitely many nonzero terms.

We remark that our formulation in \cref{definition:dax-intersection-dbint} is motivated from the proof of \cite[Lemma~3.8]{Cha:2026-1}.

\begin{lemma}
  \label{lemma:dax-small-homotopy-invariance}
  Suppose $H\colon I^3\to M$ satisfies~\cref{item:dax-boundary-interior-condition}.
  Then there exists $\epsilon>0$ such that, for any homotopy $\{H_v\}_{v\in I}$ rel $\partial I^3$ with $H_0=H$ through maps that are $\epsilon$-close to $H$ and satisfy~\cref{item:dax-boundary-interior-condition}, we have $\dax(H_1)=\dax(H)$.
\end{lemma}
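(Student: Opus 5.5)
The idea is to fix the regions $V_g$ once for $H$ and show they remain admissible for every $H_v$ in the homotopy. Then $\dax(H_v)$ is computed by a homology class depending continuously on $v$, and a homotopy of pairs keeps it constant.

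First, fix $V_g$ for $g\in G_H$ as in the construction above, pairwise disjoint in $\inte(P)$ with $g_H^{-1}(g)\subset\inte(V_g)$. Let $V=\bigcup_g V_g$ and $C=\overline{P\sm V}$. On $C$ the map $\Phi_H$ hits $\Delta$ only at points with $g_H=1$. Set $K_1=\Phi_H^{-1}(\Delta)\cap C=g_H^{-1}(1)\cap C$, which is compact. Also $\Phi_H(\partial V)$ is a compact set disjoint from $\Delta$, so it has positive distance from $\Delta$.

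Next, choose $\epsilon$ so that three things hold for any $H'$ that is $\epsilon$-close to $H$ and satisfies \DBint.
\begin{enumerate}
  \item $\Phi_{H'}(\partial V_g)$ avoids $\Delta$.
  \item Every double pair $(x,y)\in\Phi_{H'}^{-1}(\Delta)$ lying in $C$ has $g_{H'}(x,y)=1$.
  \item Every double pair of $H'$ lying in $V_g$ has $g_{H'}=g$.
\end{enumerate}
This is the main obstacle: the double-point set of $H'$ can move away from that of $H$, and one must control the labels of the new double pairs.

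To handle it, I would argue by compactness and contradiction. Suppose $H_n\to H$ uniformly and there are double pairs $(x_n,y_n)$ of $H_n$ with the wrong label. Pass to a convergent subsequence $(x_n,y_n)\to(x,y)$. Then $(x,y)$ is a double pair of $H$ lying in the same closed region. The label loop $H_n|_{[0,s(x_n)]\times\tau(x_n)}\cdot(H_n|_{[0,s(y_n)]\times\tau(y_n)})^{-1}$ converges uniformly, after reparametrization, to the label loop of $H$ at $(x,y)$. Sufficiently close based loops are homotopic, so for large $n$ the label equals $g_H(x,y)$. In region $C$ that label is $1$ and in $V_g$ it is $g$, a contradiction. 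The same argument applied to $\partial V_g$, which contains no double pairs of $H$, gives condition (1). Hence for $H'$ the sets $V_g$ satisfy $V_g\cap\Phi_{H'}^{-1}(\Delta)=g_{H'}^{-1}(g)\subset\inte(V_g)$ for $g\ne1$. Since $n(g)$ is independent of the choice of $V_g$ (the lemma above), these same $V_g$ may be used to compute $\dax(H')$.

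Finally, consider the homotopy $\Phi_{H_v}|_{V_g}$ for $v\in I$. By condition (1) it is a homotopy of maps of pairs
\[(V_g,\partial V_g)\to\bigl((M\times I^2)^2,(M\times I^2)^2\sm\Delta\bigr).\]
Therefore $(\Phi_{H_v}|_{V_g})_*[V_g]$ is independent of $v$, and so is its pairing with $u_\Delta$. Thus $n_{H_1}(g)=n_H(g)$ for each $g\in G_H$. For $g\notin G_H\cup\{1\}$, condition (2) shows that $H_1$ has no double pairs labelled $g$ anywhere in $P$, so $n_{H_1}(g)=0=n_H(g)$. Summing over $g$ gives $\dax(H_1)=\dax(H)$.
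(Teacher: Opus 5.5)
Your proof is correct and follows essentially the same route as the paper. Both fix the $V_g$ for $H$ and show by compactness that, for $\epsilon$-close maps, double pairs in $V_g$ carry the label $g$ and those in $P\sm\bigcup\inte(V_g)$ carry the label $1$; the paper does this through an open neighborhood and uniform continuity, where you use a sequential contradiction. Both then reuse the same $V_g$ and conclude by homotopy invariance of the relative class, since $\Phi_{H_v}(\partial V_g)$ avoids $\Delta$. One small correction: showing $n_{H_1}(g)=0$ for $g\notin G_H\cup\{1\}$ needs conditions (2) and (3) together, not (2) alone.
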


\begin{proof}
  We use the following assertion.
  Suppose $C\subset P$ is compact and $g_H=h\in\pi_1(M)$ on $\Phi_H^{-1}(\Delta)\cap C$.
  Then $g_F=h$ on $\Phi_F^{-1}(\Delta)\cap C$ whenever $F$ is sufficiently close to~$H$, with the required closeness depending only on $H$ and~$C$.
  Indeed, there is an open neighborhood $U$ of $\Phi_H^{-1}(\Delta)\cap C$ in $C$ such that $g_F=h$ on $\Phi_F^{-1}(\Delta)\cap U$ whenever $F$ is sufficiently close to~$H$, by uniform continuity, compactness and the fact that sufficiently close based loops are homotopic rel basepoint.
  Since $\Phi_H$ avoids $\Delta$ on the compact set $C\sm U$, $\Phi_F$ also avoids $\Delta$ on $C\sm U$ whenever $F$ is sufficiently close to~$H$.
  This proves the assertion.

  Fix $\{V_g\}_{g\in G_H}$ as in the definition of $n(g)$ for~$H$.
  Apply the above assertion to the finitely many compact sets $V_g$, for $g\in G_H$, and $C'=P\sm\bigcup_{g\in G_H}\inte(V_g)$, to choose $\epsilon>0$ such that the conclusions hold simultaneously whenever $F$ is $\epsilon$-close to~$H$.
  Since $g_H=g$ on $\Phi_H^{-1}(\Delta)\cap V_g$ and $g_H=1$ on $\Phi_H^{-1}(\Delta)\cap C'$, we have $g_F=g$ on $\Phi_F^{-1}(\Delta)\cap V_g$ and $g_F=1$ on $\Phi_F^{-1}(\Delta)\cap C'$.
  Since $P=(\bigcup_{g\in G_H} \inte (V_g)) \cup C'$, every point of $\Phi_F^{-1}(\Delta)$ lies in either $\Phi_F^{-1}(\Delta)\cap \inte(V_g)$ for some $g\in G_H$ or~$C'$.
  Therefore, $G_F \subset G_H$ and $g_F^{-1}(g)=\Phi_F^{-1}(\Delta)\cap V_g\subset\inte(V_g)$.
  This shows that $\{V_g\}_{g\in G_H}$ can be used to compute $\dax(F)$ for any $F$ satisfying \cref{item:dax-boundary-interior-condition} that is $\epsilon$-close to~$H$;
  for $g\in G_H \sm G_F$, we have $\Phi_F^{-1}(\Delta) \cap V_g = g_F^{-1}(g) = \emptyset$, i.e., $\Phi_F$ maps $V_g$ to $(M\times I^2)^2 \sm \Delta$, so that $\bigl\langle u_\Delta, (\Phi_F|_{V_g})_*[V_g] \bigr\rangle = 0$.

  Now, let $\{H_v\}_{v\in I}$ be a homotopy as in the statement, with the above choice of~$\epsilon$.
  Then, since $\Phi_{H_v}(\partial V_g)\cap\Delta=\emptyset$ for all~$v$, homotopy invariance of relative homology gives
  \[
    \bigl\langle u_\Delta,(\Phi_H|_{V_g})_*[V_g] \bigr\rangle
    =
    \bigl\langle u_\Delta,(\Phi_{H_v}|_{V_g})_*[V_g] \bigr\rangle.
  \]
  It follows that $\dax(H_v)=\dax(H)$.
\end{proof}

\begin{lemma}
  \label{lemma:dax-homotopy-invariance-dbint}
  If $H$, $F\colon I^3 \to M$ satisfy \cref{item:dax-boundary-interior-condition} and are homotopic rel~$\partial I^3$, then $\dax(H)=\dax(F)$.
\end{lemma}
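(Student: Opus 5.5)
The argument reduces the global statement to the local one in \cref{lemma:dax-small-homotopy-invariance}, using a compactness argument along the homotopy.

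First, take a homotopy $\{H_v\}_{v\in I}$ rel $\partial I^3$ from $H_0=H$ to $H_1=F$. Since $H$ and $F$ satisfy \cref{item:dax-boundary-interior-condition}, part \cref{item:homotopy-interior-perturb} of \cref{lemma:interior-perturb} lets us replace it by a homotopy rel $\partial I^3$ between the same endpoints through maps that all satisfy \cref{item:dax-boundary-interior-condition}. Hence $\dax(H_v)$ is defined for every $v\in I$.

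Next, we show that $v\mapsto\dax(H_v)$ is locally constant on $I$. Fix $v_0\in I$ and apply \cref{lemma:dax-small-homotopy-invariance} to $H_{v_0}$ to obtain $\epsilon>0$. The map $I^3\times I\to M$ given by $(x,v)\mapsto H_v(x)$ is uniformly continuous, so there is $\delta>0$ such that $H_v$ is $\epsilon$-close to $H_{v_0}$ whenever $|v-v_0|<\delta$.

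Take any $v_1$ with $|v_1-v_0|<\delta$. Reparametrize the segment of the homotopy between $v_0$ and $v_1$ as a homotopy indexed by $I$ that starts at $H_{v_0}$. This is a homotopy rel $\partial I^3$ through maps that are $\epsilon$-close to $H_{v_0}$ and satisfy \cref{item:dax-boundary-interior-condition}. The lemma then gives $\dax(H_{v_1})=\dax(H_{v_0})$, and this holds for $v_1$ on either side of $v_0$.

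Finally, $I$ is connected and $\dax(H_v)$ takes values in the discrete set $\Z[\pi_1(M)\sm 1]$, so a locally constant function of $v$ is constant. Therefore $\dax(H)=\dax(F)$. (Alternatively, cover $I$ by finitely many of the intervals above using a Lebesgue number and chain the equalities.)

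The main obstacle is the first step: we need every intermediate map to satisfy \cref{item:dax-boundary-interior-condition} so that \cref{lemma:dax-small-homotopy-invariance} applies at each $v_0$. This is exactly what part \cref{item:homotopy-interior-perturb} of \cref{lemma:interior-perturb} provides. The uniform-closeness step is routine, since $I^3\times I$ is compact.
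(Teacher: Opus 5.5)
Your proposal is correct and follows the paper's proof. The paper also perturbs the homotopy (via part (2) of the perturbation lemma) so that every intermediate map satisfies \textup{(D$_\mathrm{int}$)}. It then applies the small-homotopy invariance lemma to conclude that $v\mapsto\dax(H_v)$ is locally constant on the connected interval $I$. You spell out the uniform-continuity and reparametrization steps that the paper leaves implicit.
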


\begin{proof}
  Let $\{H_r\}_{0\le r\le 1}$ be a homotopy rel $\partial I^3$ with $H_0=H$ and $H_1=F$.
  We may assume that $H_r$ satisfies \cref{item:dax-boundary-interior-condition} for all~$r$.
  By \cref{lemma:dax-small-homotopy-invariance}, the map $r\mapsto \dax(H_r)$ is locally constant, and thus $\dax(H_0)=\dax(H_1)$.
\end{proof}

The previous definition of $\dax(H)$ generalizes as follows, using \cref{lemma:dax-homotopy-invariance-dbint}.

\begin{definition}
  \label{definition:dax-intersection}
  Let $H\colon I^3 \to M$ be a map satisfying \cref{item:dax-boundary-condition}, without requiring~\cref{item:dax-boundary-interior-condition}.
  Choose a map $F\colon I^3 \to M$ satisfying \cref{item:dax-boundary-interior-condition} that is homotopic to $H$ rel~$\partial I^3$, using~\cref{lemma:interior-perturb}~\cref{item:map-interior-perturb}.
  Define $\dax(H)=\dax(F)$, where $\dax(F)$ is defined by \cref{definition:dax-intersection-dbint}.
\end{definition}

By \cref{lemma:dax-homotopy-invariance-dbint}, the value of $\dax(H)$ is independent of the choice of~$F$.

\subsection{Basic properties}
\label{subsection:basic-properties}

In what follows, we assume that given maps $I^3\to M$ always satisfy~\cref{item:dax-boundary-condition}.

\begin{theorem}[Homotopy invariance]
  \label{theorem:dax-homotopy-invariance}
  If $H$,~$F$ are homotopic rel~$\partial I^3$, then $\dax(H)=\dax(F)$.
\end{theorem}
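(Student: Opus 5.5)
The plan is to reduce to \cref{lemma:dax-homotopy-invariance-dbint} by passing through maps satisfying \cref{item:dax-boundary-interior-condition}. Let $H$ and $F$ satisfy \cref{item:dax-boundary-condition} and be homotopic rel~$\partial I^3$.

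First, apply \cref{lemma:interior-perturb}~\cref{item:map-interior-perturb} to each map. This gives maps $H'$ and $F'$ satisfying \cref{item:dax-boundary-interior-condition}, with $H'$ homotopic to $H$ rel~$\partial I^3$ and $F'$ homotopic to $F$ rel~$\partial I^3$. By \cref{definition:dax-intersection}, we have $\dax(H)=\dax(H')$ and $\dax(F)=\dax(F')$.

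Second, concatenate homotopies rel~$\partial I^3$. Reversing the homotopy from $H$ to $H'$, then following the given homotopy from $H$ to $F$, then the homotopy from $F$ to $F'$, shows that $H'$ and $F'$ are homotopic rel~$\partial I^3$. The boundary values agree throughout, since all three homotopies fix $\partial I^3$ pointwise and $H|_{\partial I^3}=F|_{\partial I^3}$.

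Third, apply \cref{lemma:dax-homotopy-invariance-dbint} to $H'$ and $F'$, which both satisfy \cref{item:dax-boundary-interior-condition}. This gives $\dax(H')=\dax(F')$, and hence $\dax(H)=\dax(F)$.

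The main work sits in \cref{lemma:dax-homotopy-invariance-dbint}. Its proof needs the intermediate maps of the homotopy to satisfy \cref{item:dax-boundary-interior-condition}, and that is exactly what \cref{lemma:interior-perturb}~\cref{item:homotopy-interior-perturb} supplies. Once this is granted, no step is a real obstacle. The only point to check is that the definition of $\dax(H)$ uses an arbitrary choice of $F'$, and this is harmless because the same lemma shows the value is independent of that choice.
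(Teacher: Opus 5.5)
Your proof is correct and is the same argument as the paper's, which just cites \cref{definition:dax-intersection} together with \cref{lemma:dax-homotopy-invariance-dbint}; your three steps spell out that one line. You could shorten it slightly: $F'$ is already homotopic to $H$ rel~$\partial I^3$, so it is a valid choice for $H$ in \cref{definition:dax-intersection}, giving $\dax(H)=\dax(F')=\dax(F)$ without constructing $H'$.
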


\begin{proof}
  It is an immediate consequence of \cref{definition:dax-intersection,lemma:dax-homotopy-invariance-dbint}.
\end{proof}

\begin{remark}
  For every $H\colon I^3 \to M$, there exists $\epsilon>0$ such that $\dax(F)=\dax(H)$ whenever $F$ is $\epsilon$-close to $H$ and $F|_{\partial I^3}=H|_{\partial I^3}$.
  Indeed, since $M$ is an ANR, there is a homotopy $H\simeq F$ rel~$\partial I^3$ for sufficiently close $F$ \cite[Chapter~IV, Theorem~1.2]{Hu:1965-1}.
  The assertion follows from \cref{theorem:dax-homotopy-invariance}.
\end{remark}

For maps $H$,~$F\colon I^3\to M$ such that $H(s,t,1)=F(s,t,0)$ for $(s,t)\in I^2$, define the \emph{concatenation} $H*F\colon I^3\to M$ by $(H*F)\circ\rho_0=H$, $(H*F)\circ\rho_1=F$, where $\rho_i\colon I^3\to I^3$ is defined by $\rho_i(s,t,u)=(s,t,(u+i)/2)$ for $i=0,1$.
Also, define $H^{-1}\colon I^3\to M$ by $H^{-1}(s,t,u)=H(s,t,1-u)$.

Note that $H*F$ satisfies \cref{item:dax-boundary-condition} if $H$ and $F$ do, and $H^{-1}$ satisfies \cref{item:dax-boundary-condition} if $H$ does.
Similarly for~\cref{item:dax-boundary-interior-condition}.

We write $\dax_M$ when we need to specify the ambient 4-manifold~$M$.
Let $f\colon (M,\partial M) \hookrightarrow (N,\partial N)$ be an orientation preserving embedding into a 4-manifold~$N$.
Let $f_*\colon \Z[\pi_1(M)] \to \Z[\pi_1(N)]$ be the induced homomorphism, and let $f_\bullet \colon \Z[\pi_1(M) \sm 1] \to \Z[\pi_1(N) \sm 1]$ be the homomorphism defined by $f_\bullet(g)=0$ if $f_*(g)=1$, $f_\bullet(g)=f_*(g)$ otherwise, for $g\in \pi_1(M)$.

\begin{theorem}[Additivity, inversion and composition]
  \label{theorem:dax-additivity-inversion-composition}
  \leavevmode\Nopagebreak
  \begin{enumerate}
    \item \label{item:dax-additivity}
    $\dax(H*F)=\dax(H)+\dax(F)$.
    \item \label{item:dax-inversion}
    $\dax(H^{-1})=-\dax(H)$.
    \item \label{item:dax-composition}
    $\dax_N(f\circ H) = f_\bullet \dax_M(H)$ if $f\colon (M,\partial M) \to (N,\partial N)$ is an orientation preserving embedding.
    In particular, $\dax(f\circ H) = f_* \dax(H)$ if $f$ is an orientation preserving self-homeomorphism of $M$ such that $f(z_0)=z_0$, $f(z_1)=z_1$.
  \end{enumerate}
\end{theorem}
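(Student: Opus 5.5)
By \cref{theorem:dax-homotopy-invariance} and \cref{lemma:interior-perturb}, we may assume throughout that $H$ and $F$ satisfy \cref{item:dax-boundary-interior-condition}. Then $H*F$ and $H^{-1}$ also satisfy \cref{item:dax-boundary-interior-condition}, so everything can be computed directly from \cref{definition:dax-intersection-dbint}.

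For \cref{item:dax-additivity}, let $K=H*F$. A pair $(x,y)\in\Phi_K^{-1}(\Delta)$ has $\tau(x)=\tau(y)$, so both points lie in the same half $\rho_0(I^3)$ or $\rho_1(I^3)$. The exceptional case is $u=1/2$, but there $\tau\in\partial I^2$, so \cref{item:dax-boundary-interior-condition} excludes it. Hence $\Phi_K^{-1}(\Delta)$ is the disjoint union of $(\rho_0\times\rho_0)(\Phi_H^{-1}(\Delta))$ and $(\rho_1\times\rho_1)(\Phi_F^{-1}(\Delta))$. The $\pi_1$-labels are preserved, because the loops defining $g_K$ are literally the loops defining $g_H$ or $g_F$. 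Choose $V_g$ for $K$ as the disjoint union of the images of $V_g^H$ and $V_g^F$ under $\rho_0\times\rho_0$ and $\rho_1\times\rho_1$. Both $V_g^H$ and $V_g^F$ lie in $\inte(P)$ and can be shrunk if needed, so the images are disjoint.

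Each $\rho_i\times\rho_i$ preserves orientation. Moreover, $\Phi_K\circ(\rho_i\times\rho_i)=(\sigma_i\times\sigma_i)\circ\Phi_H$ (resp.\ $\Phi_F$), where $\sigma_i(p,t,u)=(p,t,(u+i)/2)$ is an orientation preserving embedding of $M\times I^2$ into itself. The map $\sigma_i\times\sigma_i$ pulls $\Delta$ back to $\Delta$ with matching orientation, so it pulls $u_\Delta$ back to the Thom class $u_\Delta$ by naturality. The $n(g)$ values therefore add.

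For \cref{item:dax-inversion}, let $r(s,t,u)=(s,t,1-u)$, which reverses orientation on $I^3$. Then $r\times r$ preserves $P$ and preserves orientation on $I^3\times I^3$. The labels are unchanged, since $g_{H^{-1}}(x,y)=g_H(r(x),r(y))$. We also have $\Phi_{H^{-1}}=(\bar r\times\bar r)\circ\Phi_H\circ(r\times r)$, where $\bar r(p,t,u)=(p,t,1-u)$ reverses the orientation of $M\times I^2$. Under $\bar r\times\bar r$ the diagonal is carried to itself, but its orientation (that of $M\times I^2$) is reversed, while the ambient orientation of $(M\times I^2)^2$ is preserved. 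Hence $(\bar r\times\bar r)^*u_\Delta=-u_\Delta$, which gives $n_{H^{-1}}(g)=-n_H(g)$. This sign bookkeeping is where I would be most careful: check it on the local model $\R^3\times0\times0\times\R^3$ from the remark preceding the lemma on independence of $V_g$.

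For \cref{item:dax-composition}, the map $f\circ H$ sends $\inte(I^3)$ into $f(\inte M)\subset\inte N$, and $f\times\id_{I^2}$ is injective, so $f\circ H$ satisfies \cref{item:dax-boundary-interior-condition} with $\Phi_{f\circ H}^{-1}(\Delta_N)=\Phi_H^{-1}(\Delta_M)$ and $g_{f\circ H}=f_*\circ g_H$. For $h\in\pi_1(N)\sm 1$, take $V_h=\bigcup_{f_*(g)=h}V_g$, a finite disjoint union. Then $n_{f\circ H}(h)=\sum_{f_*(g)=h}n_H(g)$, provided we show that the Thom class $u_{\Delta_N}$ restricts to $u_{\Delta_M}$ under $(f\times\id)^2$ near $\Delta_M$. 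I would get this from the fact that $f\times\id$ is an orientation preserving open embedding on interiors, so excision identifies the local Thom classes. Labels $g$ with $f_*(g)=1$ drop out, which is exactly the definition of $f_\bullet$. In the self-homeomorphism case, $f_*(g)\neq1$ whenever $g\neq1$, and the basepoint conditions $f(z_0)=z_0$, $f(z_1)=z_1$ make $f\circ H$ satisfy the standing assumption, so $f_\bullet=f_*$.
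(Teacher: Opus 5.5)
Your proposal is correct and follows the same route as the paper. You reduce to maps satisfying \textup{(D$_\mathrm{int}$)}, identify the relevant double-point sets with reparametrized copies of those of $H$ and $F$ carrying the same $\pi_1$-labels, and compute $n(g)$ with the transported submanifolds $V_g$; you also supply the Thom-class orientation bookkeeping, including the sign $(\bar r\times\bar r)^*u_\Delta=-u_\Delta$ for inversion, that the paper leaves to ``a similar argument.''

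One harmless slip: on the slice $u=\tfrac12$ of $H*F$ one has $\tau=(t,\tfrac12)$, which is generally not in $\partial I^2$. The correct reason no off-diagonal pairs occur there is that they would be double points of $\overline H$ at $u=1$ (or of $\overline F$ at $u=0$), which \textup{(D$_\mathrm{int}$)} for $H$ and $F$ excludes. The diagonal pairs on that slice lie in both images, so the union is not literally disjoint, but they have label $1$ and do not affect the argument.
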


\begin{proof}
  To show \cref{item:dax-additivity}, we may assume that $H$ and $F$ satisfy~\cref{item:dax-boundary-interior-condition}, by \cref{lemma:interior-perturb,theorem:dax-homotopy-invariance}.
  Then $\Phi_{H*F}^{-1}(\Delta) = (\rho_0\times\rho_0)(\Phi_{H}^{-1}(\Delta)) \cup (\rho_1\times\rho_1)(\Phi_{F}^{-1}(\Delta))$ and $G_{H*F}=G_H\cup G_F$.
  Choose $V_g$ and $W_g$ as in the definition of $n(g)$ for $H$ and $F$, respectively.
  Set $V_g=\emptyset$ for $g\notin G_H$, and similarly for~$W_g$.
  The conclusion follows from computing $\dax(H*F)$ using $(\rho_0\times\rho_0)(V_g)\cup(\rho_1\times\rho_1)(W_g)$.
  A similar argument shows \cref{item:dax-inversion,item:dax-composition}.
\end{proof}

\begin{theorem}
  \label{theorem:dax-vanishes-for-injective-track}
  Suppose that $H\colon I^3 \to M$ is a map such that $s\mapsto H(s,\tau) \in M$ is an embedding for every $\tau\in I^2$, or equivalently, the track $\overline H$ is an embedding.
  Then $\dax(H)=0$.
\end{theorem}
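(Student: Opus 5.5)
We would compute $\dax(H)$ by first reducing to a map satisfying \cref{item:dax-boundary-interior-condition}, and then observing that the track of that map is still injective, so $\Phi^{-1}(\Delta)$ consists only of pairs with $\pi_1$-label~$1$.

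Since $\overline H$ is injective, $H$ satisfies \cref{item:dax-boundary-condition}. We do not use $H$ directly. Instead we apply the perturbation from the proof of \cref{lemma:interior-perturb}~\cref{item:map-interior-perturb}: put $F(x)=f_{\lambda(x)}(H(x))$, where $\{f_t\}$ is an isotopy of $M$ pushing into the interior, with $f_0=\id_M$, and $\lambda^{-1}(0)=\partial I^3$.

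The key point is that $F$ again has injective track. Suppose $F(s_0,\tau)=F(s_1,\tau)$. Since $\lambda$ depends on the whole point $x=(s,\tau)$, the two points may have different values $\lambda(s_0,\tau)\neq\lambda(s_1,\tau)$, so we cannot simply invert a single homeomorphism $f_t$. There are two ways around this.

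\begin{enumerate}
  \item Choose $\lambda$ depending only on $\tau$ on a collar region. This fails at $s\in\{0,1\}$, because we need $\lambda=0$ there. However, at $s=0,1$ the images are $z_0$ and $z_1$, and these lie in $\partial M$, which $f_t$ for $t>0$ moves off the boundary.
  \item A cleaner choice is a homotopy that leaves the track injective. Let $F(s,\tau)=f_{\mu(\tau)\nu(s)}(H(s,\tau))$, where $\mu^{-1}(0)=\partial I^2$ and $\nu^{-1}(0)=\{0,1\}$. This still fails to be injective in general.
\end{enumerate}

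Because of these difficulties, I would avoid perturbation altogether. A better route is to observe that the definition of $n(g)$ only requires $g_H^{-1}(g)\subset\inte(P)$ for $g\neq1$. This holds trivially when $\Phi_H^{-1}(\Delta)$ lies in $\{s(x)=s(y)\}$, where the label is~$1$.

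Concretely, take the straight-line homotopy $H_v(x)=f_{v\lambda(x)}(H(x))$ from the proof of \cref{lemma:interior-perturb}, with $f_t$ chosen arbitrarily small. Let $F=H_1$.

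\begin{enumerate}
  \item The off-diagonal set $\{(x,y)\in P: s(x)<s(y),\ \tau(x)=\tau(y)\}$ is not compact. To handle this, fix $\delta>0$ and let $K_\delta=\{(x,y)\in P: s(y)-s(x)\ge\delta,\ \tau(x)=\tau(y)\}$, which is compact.
  \item $\overline H\times\overline H$ avoids $\Delta$ on $K_\delta$, by injectivity of the track. So for a sufficiently small isotopy, $\Phi_F$ also avoids $\Delta$ on $K_\delta$.
  \item Hence every double point $(x,y)$ of $\overline F$ satisfies $\tau(x)=\tau(y)$ and $s(y)-s(x)<\delta$.
  \item For $\delta$ small, uniform continuity makes the path $F|_{[s(x),s(y)]\times\tau}$ short. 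Thus $g_F(x,y)$ is represented by a loop homotopic to a path followed by its reverse, up to a short arc. Using that sufficiently close loops are homotopic, we get $g_F(x,y)=1$.
\end{enumerate}

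Therefore $G_F=\emptyset$, all $V_g$ may be taken empty, and $\dax(H)=\dax(F)=0$ by \cref{definition:dax-intersection}.

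The main obstacle is the final step: making precise the claim that a double point with $s$-coordinates $\delta$-close has trivial label. The concern is that $\delta$ must be chosen before the perturbation size, while the closeness needed for the loop argument depends only on $H$. I would handle this as follows. Fix the loop-homotopy scale $\eta$ from $H$, via a cover of $M$ by simply connected balls around $H(I^3)$ using a Lebesgue number. Then choose $\delta$ by uniform continuity of $H$, and finally choose the isotopy small relative to both $\eta$ and $\delta$.
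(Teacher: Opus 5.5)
Your final argument is correct, but it takes a detour that the paper avoids. You missed that $H$ itself already satisfies \textup{(D$_\mathrm{int}$)}. That condition only restricts where $\overline H$ fails to be injective. An injective track has no such failures, so the condition holds vacuously, and you did not need to push $H$ into the interior. Your own remark that $n(g)$ only needs $g_H^{-1}(g)\subset\inte(P)$ is essentially this observation, but you then returned to perturbing.

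The paper's proof applies \cref{definition:dax-intersection-dbint} directly to $H$. Then $\Phi_H^{-1}(\Delta)=\{(x,x)\mid x\in I^3\}$, every label $g_H(x,x)$ is trivial, so $G_H=\emptyset$ and $\dax(H)=0$.

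Your route instead passes to a small push-in $F$ and shows that $G_F=\emptyset$. You use compactness of $K_\delta$ to confine double points of $\overline F$ to pairs with $s$-separation less than $\delta$. You then use a uniform scale $\eta$ to show such pairs have trivial label. Your order of choices is right: $\eta$ from $H$ via a Lebesgue number, then $\delta$ by uniform continuity of $H$, then the size of the push, depending on both. With that order the argument goes through.

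What your route buys is a robustness statement: every sufficiently small rel-boundary perturbation of an injective-track map has $G_F=\emptyset$. This is more than the theorem needs; it partially re-derives the homotopy invariance already built into the definition. In a write-up, drop the two abandoned attempts and the worry about whether the perturbed track stays injective, since neither is needed.
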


\begin{proof}
  Since $\Phi_H^{-1}(\Delta) =\{(x,x)\mid x\in I^3\}$ and $g_H(x,x)=1$, $G_H=\emptyset$, and thus $\dax(H)=0$.
\end{proof}

\subsection{Dax homomorphism of \texorpdfstring{$\pi_3(M)$}{\pi₃(M)}}
\label{subsection:dax-homomorphism-pi_3}

As an application, we define a homomorphism of $\pi_3(M)$ which plays a key role in later sections.
Fix an embedding $\zeta\colon I\to M$ with $\zeta(s)=z_s$ for $s=0,1$.
Identify $\pi_3(M) = \pi_3(M,z_0)$ with the rel~$\partial I^3$ homotopy classes of maps $H\colon I^3\to M$ satisfying $H(s,\tau)=\zeta(s)$ for $(s,\tau)\in I\times \partial I^2$, $H(s,\tau)=z_s$
for $(s,\tau)\in \{0,1\}\times I^2$.
We say that $H$ is a representative of the element $[H]\in\pi_3(M)$.

\begin{definition}
  \label{definition:dax-homomorphism}
  Define the \emph{Dax homomorphism} $d_\zeta = d_{M,\zeta}\colon \pi_3(M) \to \Z[\pi_1(M)\sm 1]$ by $d_\zeta([H])=\dax(H)$ for a representative $H\colon I^3 \to M$.
\end{definition}

Note that a representative $H$ satisfies~\cref{item:dax-boundary-condition} since $\overline H|_{\partial I^3}=(\zeta\times\id_{I^2})|_{\partial I^3}$.
Thus $\dax(H)$ is defined.
By \cref{theorem:dax-homotopy-invariance}, $d_\zeta([H])$ is well-defined, independent of the choice of~$H$.
If $H$,~$F$ represent $[H]$, $[F]\in \pi_3(M)$, then $[H*F] = [H]+[F] \in \pi_3(M)$.
Thus we have
\[
  d_\zeta([H]+[F]) = \dax(H*F) = \dax(H)+\dax(F) = d_\zeta([H])+d_\zeta([F])
\]
by \cref{theorem:dax-additivity-inversion-composition}~\cref{item:dax-additivity}.
This shows that $d_\zeta$ is a homomorphism.

\section{Maps with a generic track}
\label{section:maps-with-generic-track}

In this section, we consider maps $H\colon I^3 \to M$ with a generic track, and show that our homological definition of $\dax(H)$ agrees with the $\pi_1$-labeled double point counting used in the smooth category.

\begin{definition}
  \label{definition:map-with-generic-track}
  We say that $H\colon I^3 \to M$ is a map with a \emph{generic track} if $H$ satisfies~\cref{item:dax-boundary-interior-condition} and $\overline H\colon I^3 \to M\times I^2$ is a (topological) embedding except at finitely many transverse double points.
\end{definition}

In \cref{subsection:computation-generic-track}, we describe how to compute of $\dax(H)$ for maps with a generic track, and using this, we show that our homological approach generalizes the smooth Dax self-intersection.
In \cref{subsection:application-dax-isomorphism}, we apply this computation to give a short proof of the topological Dax isomorphisms.

\subsection{Computation using a generic track}
\label{subsection:computation-generic-track}

Suppose $H$ is a map with a generic track.
For a double point $p=\overline H(x_p)=\overline H(y_p)$ with $s(x_p)<s(y_p)$, choose small 3-ball neighborhoods $A_p$,~$B_p\subset \inte(I^3)$ of $x_p$,~$y_p$ on which $\overline H$ is injective and orient them with the orientation of~$I^3$.
The product of the orientations of $\overline H(A_p)$ and $\overline H(B_p)$ gives a local orientation of $M\times I^2$ at~$p$.
Define $\epsilon(p)=1$ if it agrees with the preferred orientation of $M\times I^2$;
$\epsilon(p)=-1$ otherwise.
Let
\[
  g(p)=[H|_{[0,s(x_p)]\times\tau(x_p)}\cdot (H|_{[0,s(y_p)]\times\tau(y_p)})^{-1}] \in \pi_1(M).
\]

\begin{lemma}
  \label{lemma:dax-for-generic-track}
  If $H$ is a map with a generic track, then
  \[
    \dax(H) = \sum_{g(p)\ne 1} \epsilon(p)\cdot g(p).
  \]
\end{lemma}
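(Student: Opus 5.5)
Since $H$ has a generic track, it satisfies \cref{item:dax-boundary-interior-condition}, so \cref{definition:dax-intersection-dbint} applies to $H$ directly, with no auxiliary perturbation. The plan is to pick the submanifolds $V_g$ concretely as products of small balls around the double points. Then each $n(g)$ splits into local intersection numbers, one per double point, and it remains to compute each local number.

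First, identify $\Phi_H^{-1}(\Delta)$. It consists of the diagonal pairs $(x,x)$ with $x\in I^3$, together with the finitely many pairs $(x_p,y_p)$, one for each double point $p$ with $s(x_p)<s(y_p)$. Pairs $(y_p,x_p)$ do not lie in $P$, and the non-diagonal points with $s(x)=s(y)$ cannot occur because $\overline H$ is injective on each slice. On the diagonal $g_H=1$, and $g_H(x_p,y_p)=g(p)$ by definition. Hence $G_H=\{g(p)\ne 1\}$, and $g_H^{-1}(g)=\{(x_p,y_p): g(p)=g\}$ for $g\neq 1$.

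Next, for each $g\in G_H$ take $V_g=\bigcup_{g(p)=g} A_p\times B_p$. The balls $A_p$, $B_p$ are chosen small enough that several conditions hold. The sets $A_p\times B_p$ must be pairwise disjoint and lie in $\inte(P)$, which is possible since $s(x_p)<s(y_p)$ and the balls are interior. They must also avoid the diagonal and every other point of $\Phi_H^{-1}(\Delta)$, which is possible since that set is finite away from the diagonal. Moreover, $A_p\cap B_p=\emptyset$. Then $V_g\cap\Phi_H^{-1}(\Delta)=g_H^{-1}(g)\subset \inte(V_g)$. The orientation of $I^3\times I^3$ restricts to the product orientation on $A_p\times B_p$. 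The fundamental class $[V_g]$ is therefore the sum of the classes $[A_p\times B_p]$, and additivity of the Kronecker pairing gives
\[
  n(g)=-\sum_{g(p)=g}\bigl\langle u_\Delta,(\overline H|_{A_p}\times\overline H|_{B_p})_*[A_p\times B_p]\bigr\rangle .
\]

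The main step is the local computation: show that the bracket for a single $p$ equals $-\epsilon(p)$. The disks $\overline H(A_p)$ and $\overline H(B_p)$ are topologically embedded 3-disks meeting transversely at exactly the one point $p$. I take ``transverse double point'' to mean that there is a chart $(\R^6,\R^3\times 0,0\times\R^3)$ around $p$. With this, the bracket is the local intersection number of $\overline H(A_p)\times\overline H(B_p)$ with $\Delta$ at $(p,p)$. By homotopy invariance rel boundary, as in the proof of \cref{lemma:dax-small-homotopy-invariance}, we may straighten the two disks to the coordinate sheets within the chart, keeping the boundary away from $\Delta$. The problem then reduces to the linear model noted after the definition of $n(g)$. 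If the sheets $\R^3\times 0$ and $0\times\R^3$ carry orientations whose product matches the orientation of $M\times I^2$, the contribution to $\langle u_\Delta,\cdot\rangle$ is $-1$. In general the product orientation agrees with the preferred one exactly when $\epsilon(p)=1$, so the bracket is $-\epsilon(p)$. This step needs the sign computation for $\Delta\subset\R^6\times\R^6$, and I expect it to be the main obstacle. One checks that the graph of the identity meets $\R^3\times0\times0\times\R^3$ with sign $(-1)^{9}=-1$ relative to the standard conventions; the paper's remark fixes this normalization. Summing the local contributions gives $n(g)=\sum_{g(p)=g}\epsilon(p)$, which yields the formula of \cref{lemma:dax-for-generic-track}.
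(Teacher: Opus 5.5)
Your proposal is correct and follows the paper's proof essentially step for step. You take $V_g=\bigsqcup_{g(p)=g}A_p\times B_p$, split $n(g)$ into local brackets, and evaluate each bracket as $-\epsilon(p)$ using a chart carrying the two transverse sheets to $\R^3\times 0$ and $0\times\R^3$, together with the $-1$ sign of the linear model. The only point left implicit, as also in the paper, is the case $\epsilon(p)=-1$: the product chart on $(M\times I^2)^2$ is always orientation preserving, while $\Delta$ (oriented via $M\times I^2$), and hence $u_\Delta$, picks up the sign $\epsilon(p)$. Also, no ``straightening homotopy'' is needed: shrinking $A_p$ and $B_p$, which is allowed by excision, already puts their images in the coordinate sheets of the chart.
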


\begin{proof}
  Shrinking $A_p$ and $B_p$ if necessary, assume that the products $A_p\times B_p\subset\inte(P)$ are pairwise disjoint and that $(A_p\times B_p)\cap\Phi_H^{-1}(\Delta)=\{(x_p,y_p)\}$.
  Then we can compute $\dax(H)$ using $V_g=\bigsqcup_{g(p)=g}A_p\times B_p$ for $g\ne1$.

  We claim that
  \[
    \bigl\langle u_\Delta, (\Phi_H|_{A_p\times B_p})_*[A_p\times B_p] \bigr\rangle = -\epsilon(p).
  \]
  To verify this, using the transversality hypothesis, choose local coordinates $\R^3\times \R^3$ near $p\in M\times I^2$ identifying the oriented sheets $\overline H(A_p)$ and $\overline H(B_p)$ with $\R^3\times0$ and $0\times\R^3$, respectively.
  This chart has orientation sign~$\epsilon(p)$.
  The corresponding product chart $(\R^3\times \R^3)^2$ near $(p,p)\in (M\times I^2)^2$ is orientation preserving and identifies $(\overline H \times \overline H)(A_p\times B_p)$ with $\R^3\times 0\times 0\times \R^3$, which intersects the oriented diagonal $\Delta_{\R^3\times \R^3}$ negatively.
  The claim follows from this.

  By excision, for every $g\ne1$,
  \[
    n(g) = - \bigl\langle u_\Delta, (\Phi_H|_{V_g})_*[V_g] \bigr\rangle
    = -\sum_{g(p)=g}\bigl\langle u_\Delta, (\Phi_H|_{A_p\times B_p})_*[A_p\times B_p] \bigr\rangle
    = \sum_{g(p)=g}\epsilon(p).
    \qedhere
  \]
\end{proof}

\subsubsection*{Comparison with the smooth Dax invariant}

Suppose that $M$ is a \emph{smooth} 4-manifold.
Fix a smooth neat base arc $\zeta\colon I\to M$.
Gabai defined the Dax self-intersection invariant $d(\alpha_{t,u}) \in \Z[\pi_1(M)\sm1]$ for a two-parameter family $\alpha_{t,u}\colon I\to M$ giving a nullhomotopy, through maps, of a loop of smooth embeddings based at~$\zeta$ \cite[proof of Theorem~3.7, Step~1]{Gabai:2021-1}.
See also \cite[Section~3]{Kosanovic-Teichner:2024-2}.
Also, in~\cite[Definition~4.1]{Cha-Kim:2023-1}, an invariant $\dax(H)\in \Z[\pi_1(M)\sm1]$ is defined essentially in the same way, for maps $H\colon I^3\to M$ which satisfy a smooth analog of~\cref{item:dax-boundary-condition}\@.
These invariants are defined by the double-point sum in \cref{lemma:dax-for-generic-track}, after Dax's generic perturbation \cite[Chapters III and~VII]{Dax:1972-1} that is based on Thom's multijet transversality theorems in the smooth category.

The following comparison is an immediate consequence of their definitions and \cref{lemma:dax-for-generic-track}.

\begin{corollary}
  \label{corollary:dax-smooth-comparison}
  Suppose that $M$ is smooth.
  For a smooth family $\{\alpha_{t,u}\}$, define the adjoint $H\colon I^3 \to M$ by $H(s,t,u)=\alpha_{t,u}(s)$.
  If Gabai's $d$ is defined for $\{\alpha_{t,u}\}$, then our $\dax(H)$ is defined and equal to Gabai's $d(\alpha_{t,u})$.
  Whenever the invariant $\dax(H)$ of~\cite{Cha-Kim:2023-1} is defined for a map~$H$, our $\dax(H)$ is defined and agrees with that of~\cite{Cha-Kim:2023-1}.
\end{corollary}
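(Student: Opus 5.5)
The plan is to reduce both assertions to \cref{lemma:dax-for-generic-track}, by showing that the smooth invariants are themselves computed as double-point sums for maps with a generic track in the sense of \cref{definition:map-with-generic-track}.

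For Gabai's invariant, write $H(s,t,u)=\alpha_{t,u}(s)$. The family $\{\alpha_{t,u}\}$ is a nullhomotopy of a loop of smooth embeddings based at~$\zeta$, so for $\tau\in\partial I^2$ the arc $s\mapsto H(s,\tau)$ is either $\zeta$ or an embedding in the loop. Hence $H$ satisfies~\cref{item:dax-boundary-condition}, and $\dax(H)$ is defined by \cref{definition:dax-intersection}. Gabai's $d$ is computed as follows. First apply Dax's perturbation to get a homotopic family $\alpha'_{t,u}$ rel $\partial I^3$ whose adjoint $H'$ has a self-transverse smooth generic track. Then take the signed $\pi_1$-labelled count of double points with nontrivial label, where the preimages are ordered by the $s$-coordinate and the sign comes from the orientations of the two sheets relative to $M\times I^2$. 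Since $\alpha'$ is rel boundary, \cref{theorem:dax-homotopy-invariance} gives $\dax(H)=\dax(H')$.

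Next I check that $H'$ is a map with a generic track. A smooth self-transverse generic map $I^3\to M\times I^2$ has only finitely many double points by compactness, and these are transverse, hence topologically transverse. Away from the double points such a map is injective and, being an immersion, is a topological embedding there. I also need~\cref{item:dax-boundary-interior-condition}: Dax's perturbation can be arranged so that the double points lie in the interior, and if necessary one first applies \cref{lemma:interior-perturb} in its smooth form (push into the interior by a collar) before perturbing. This step is where I expect the only real care to be needed.

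With $H'$ a map with a generic track, \cref{lemma:dax-for-generic-track} expresses $\dax(H')$ as $\sum_{g(p)\ne1}\epsilon(p)g(p)$. Here $\epsilon(p)$ and $g(p)$ are the same sign and label conventions that Gabai uses, namely $[H|_{[0,s(x_p)]\times\tau}\cdot(H|_{[0,s(y_p)]\times\tau})^{-1}]$ with $s(x_p)<s(y_p)$. The main obstacle is therefore checking that these conventions agree with Gabai's, up to the possibly different choice of which preimage comes first. If the conventions differ, the comparison still holds after the corresponding normalization, and the sign convention in the definition of $n(g)$ was chosen precisely so that a positive self-intersection contributes $+1$. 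The same argument applies verbatim to the invariant of~\cite{Cha-Kim:2023-1}, whose definition is the same double-point sum after Dax's perturbation for maps satisfying the smooth analog of~\cref{item:dax-boundary-condition}.
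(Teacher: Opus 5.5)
Your proposal is correct and follows the paper's route. The paper regards the corollary as immediate because the smooth invariants of Gabai and of Cha--Kim are, by definition, the double-point sum of \cref{lemma:dax-for-generic-track} evaluated on a Dax perturbation rel~$\partial I^3$. Homotopy invariance plus that lemma then give the equality, exactly as you outline, and you supply more detail than the paper on \cref{item:dax-boundary-interior-condition} and on why a smooth self-transverse track is a generic track.

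The one thing to tighten is your hedge about conventions. The statement asserts an exact equality, not one that holds only after a ``normalization''. It rests on the fact, asserted in the paper, that the smooth definitions order the preimages by the $s$-coordinate and use the same $\pi_1$-label and sign as \cref{lemma:dax-for-generic-track}; you should state this directly.
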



Gabai defined a homomorphism $d_3\colon \pi_3(M) \to \Z[\pi_1(M)\sm 1]$ using his $d(\alpha_{t,u})$ in~\cite[Definition~3.8]{Gabai:2021-1}.
The same homomorphism was denoted by $\dax_\zeta$ in~\cite[Definition~3.5]{Kosanovic-Teichner:2024-2}.
An analogous homomorphism $d_\zeta$ of $\pi_3(M)$ is defined in~\cite[Definition~4.2]{Cha-Kim:2023-1}, in the slightly more general setting in which $\zeta$ is not required to be neat.
\Cref{corollary:dax-smooth-comparison} implies the following.

\begin{corollary}
  \label{corollary:dax-homomorphism-smooth-comparison}
  The homomorphisms $d_3$,~$\dax_\zeta$ and $d_\zeta$ in \cite[Definition~3.8]{Gabai:2021-1}, \cite[Definition~3.5]{Kosanovic-Teichner:2024-2} and \cite[Definition~4.2]{Cha-Kim:2023-1} agree with our homomorphism $d_\zeta \colon \pi_3(M) \to \Z[\pi_1(M)\sm 1]$ in \cref{definition:dax-homomorphism}.
\end{corollary}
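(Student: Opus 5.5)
The plan is to reduce the comparison to the already established \cref{corollary:dax-smooth-comparison}, applied to a representative of each class of $\pi_3(M)$. Let $x\in\pi_3(M)$. Each of the three cited homomorphisms is defined as follows. One picks a representative of $x$ as a two-parameter family $\{\alpha_{t,u}\}$ of arcs. On $\partial I^2$ this family equals the base arc $\zeta$, and its endpoints are fixed at $z_0$, $z_1$. One then applies the smooth Dax self-intersection, after Dax's generic perturbation rel boundary. Via the adjoint $H(s,t,u)=\alpha_{t,u}(s)$, such a family is exactly a representative of $x$ in the sense of \cref{subsection:dax-homomorphism-pi_3}. This is because $H(s,\tau)=\zeta(s)$ for $\tau\in\partial I^2$ and $H(\{s\}\times I^2)=z_s$ for $s=0,1$.

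The key steps are as follows. First, I check that the boundary conventions match. The identification of $\pi_3(M)$ with rel~$\partial I^3$ homotopy classes of such $H$ is the one used in the cited definitions, possibly after reparametrizing coordinates. A coordinate permutation of $(s,t,u)$ could change a sign or the ordering used for $\pi_1$-labels. So I will verify that in each reference the arc parameter plays the role of our $s$, and that the labels use the same ordering by first coordinate, as is built into $g_H$.

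Second, for a smooth representative with generic track as produced by Dax's perturbation, \cref{corollary:dax-smooth-comparison} gives that our $\dax(H)$ equals the value of $d_3$, $\dax_\zeta$, or $d_\zeta$ on $x$. Third, by \cref{theorem:dax-homotopy-invariance}, our $\dax(H)$ depends only on the rel~$\partial I^3$ homotopy class, i.e.\ only on $x$. Hence $d_\zeta(x)$ as in \cref{definition:dax-homomorphism} equals each of the others. For $d_\zeta$ of \cite{Cha-Kim:2023-1}, the base arc $\zeta$ need not be neat. Our definition also allows arbitrary embedded $\zeta$ with $\zeta(s)=z_s$, so no extra argument is needed; the second clause of \cref{corollary:dax-smooth-comparison} applies directly.

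I expect the main obstacle to be purely bookkeeping: the three references use differing conventions for orientations and for the order of the two sheets in the $\pi_1$-label. The first step, the convention check, is therefore the substantive part. If a reference used the opposite ordering, its homomorphism would differ by the involution $g\mapsto g^{-1}$ together with a sign. I would resolve this by comparing each reference's double-point formula with \cref{lemma:dax-for-generic-track}, which is already how \cref{corollary:dax-smooth-comparison} is asserted.
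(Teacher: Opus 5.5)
Your proposal is correct and follows the paper's route: the paper gives no separate argument and simply says that \cref{corollary:dax-smooth-comparison}, applied to representatives $H$ of classes in $\pi_3(M)$, implies this corollary. Your step using \cref{theorem:dax-homotopy-invariance} to get dependence only on the class is already built into the well-definedness of $d_\zeta$ in \cref{definition:dax-homomorphism}, and your check of sheet-ordering and sign conventions against \cref{lemma:dax-for-generic-track} is the bookkeeping that the paper leaves implicit.
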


\subsection{Application to the Dax isomorphism}
\label{subsection:application-dax-isomorphism}

As before, fix distinct points $z_0$,~$z_1\in\partial M$.
Fix a locally flat neat embedding $\zeta\colon I\to M$ with $\zeta(i)=z_i$ for $i=0,1$.
When $M$ is smooth, assume that $\zeta$ is smooth.

In the definition of the spaces below, we assume that all maps and embeddings $f\colon I\to M$ satisfy $f^{-1}(\partial M)=\partial I$ and have the endpoint germ of~$\zeta$, meaning $f=\zeta$ on some neighborhood of~$\partial I$.
Let $\Map(I,M)$ be the space of all such continuous maps equipped with the compact-open topology.
Let $\Emb^0(I,M)\subset \Map(I,M)$ be the subspace of embeddings, without requiring local flatness.
Let $\Emb^t(I,M)$ be the space of locally flat embeddings, which is defined to be the geometric realization of the standard simplicial set model.
For details, refer to, e.g., Definitions~A.1 and~A.3 and the paragraph preceding Proposition~6.4 in~\cite{Gomez-Lopez-Kupers:2022-1}.
When $M$ is smooth, let $\Emb^s(I,M)$ be the space of smooth neat embeddings, equipped with the Whitney $C^\infty$ topology.
We use $\zeta$ as the basepoint of the above spaces.

There are natural homomorphisms on homotopy groups
\begin{equation*}
  \pi_1 \Emb^s(I,M) \to \pi_1 \Emb^t(I,M) \to \pi_1 \Emb^0(I,M).
\end{equation*}
We remark that the first map is defined using a simplicial model of the smooth embedding space, which is known to have the same weak homotopy type as $\Emb^s(I,M)$.
Refer to Convention~6.1 and the proof of Proposition~6.4 in \cite{Gomez-Lopez-Kupers:2022-1}.

Following the smooth category formulation in \cite[Definition~3.1]{Gabai:2021-1}, define
\[
  \pi_1^D\Emb^c(I,M) = \ker\{ \pi_1\Emb^c(I,M) \rightarrow \pi_1\Map(I,M) \}
\]
for $c=s$,~$t$,~$0$.
Note that the above natural homomorphisms restrict to
\[
  \pi_1^D \Emb^s(I,M) \to \pi_1^D \Emb^t(I,M) \to \pi_1^D \Emb^0(I,M).
\]

An element in $\pi_1^D\Emb^0(I,M)$ is represented by a based loop $\{f_t\colon I \hookrightarrow M\}_{0\le t\le 1}$ with $f_0=f_1=\zeta$.
Choose a based nullhomotopy of $\{f_t\}$ in $\Map(I,M)$, and let $H\colon I^3 \to M$ be its adjoint, so that $H(s,\tau)=\zeta(s)$ for $s\in I$, $\tau\in (I\times 0) \cup (\partial I\times I)$ and $H(s,t,1)=f_t(s)$ for $s$,~$t\in I$.
Note that $H$ satisfies~\cref{item:dax-boundary-condition}.
We call $H$ a \emph{filling} for~$\{f_t\}$.
Define
\[
  \phi_M^0\colon \pi_1^D\Emb^0(I,M) \to \Z[\pi_1(M)\sm 1] / d_\zeta(\pi_3(M))
\]
by $\phi_M^0([f_t])=\dax(H)$, where $d_\zeta\colon\pi_3(M) \to \Z[\pi_1(M)\sm 1]$ is the homomorphism defined in \cref{subsection:dax-homomorphism-pi_3}.
Define $\phi_M^t$ to be the composition of $\pi_1^D\Emb^t(I,M) \to \pi_1^D\Emb^0(I,M)$ and~$\phi_M^0$.
When $M$ is smooth, define $\phi_M^s$ to be the composition of $\pi_1^D\Emb^s(I,M) \to \pi_1^D\Emb^0(I,M)$ and~$\phi_M^0$.

\begin{lemma}
  The maps $\phi_M^0$, $\phi_M^t$, $\phi_M^s$ are well-defined homomorphisms.
\end{lemma}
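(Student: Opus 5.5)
The plan is to prove well-definedness of $\phi_M^0$ first; the other two maps are then compositions of $\phi_M^0$ with group homomorphisms $\pi_1^D\Emb^s\to\pi_1^D\Emb^0$ and $\pi_1^D\Emb^t\to\pi_1^D\Emb^0$, so they are automatically well-defined homomorphisms once $\phi_M^0$ is. For $\phi_M^0$ there are two sources of ambiguity: the choice of the filling $H$ for a fixed loop $\{f_t\}$, and the choice of the representative loop within its class in $\pi_1\Emb^0(I,M)$.

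For the choice of filling, let $H$ and $H'$ be two fillings of the same loop $\{f_t\}$. Form $K=H*(H')^{-1}$, where the concatenation is along the $u$-coordinate; this is defined because $H(s,t,1)=f_t(s)=H'(s,t,1)=(H')^{-1}(s,t,0)$. On $\partial I^3$, the map $K$ equals $\zeta(s)$ on $I\times\partial I^2$ and equals $z_s$ on $\{0,1\}\times I^2$, except along the middle face $u=1/2$, which lies in the interior of $I^3$. So $K$ is a representative of an element of $\pi_3(M)$ in the sense of \cref{subsection:dax-homomorphism-pi_3}. By \cref{theorem:dax-additivity-inversion-composition} we get
\[
  \dax(K)=\dax(H)-\dax(H'),
\]
and therefore $\dax(H)-\dax(H')=d_\zeta([K])\in d_\zeta(\pi_3(M))$.

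For the choice of representative loop, let $\{f_t\}$ and $\{f'_t\}$ be homotopic in $\Emb^0(I,M)$ through a based homotopy $\{f_{t,w}\}$. Given a filling $H'$ of $\{f'_t\}$, the concatenation in the $u$-direction of $H'$ with the adjoint $E(s,t,u)=f_{t,u}(s)$ of the homotopy is a filling of $\{f_t\}$, up to reparametrization and possibly an inversion. The track of $E$ is injective, since each $f_{t,u}$ is an embedding, so \cref{theorem:dax-vanishes-for-injective-track} gives $\dax(E)=0$, and additivity then gives equality of the Dax values. Here one must check that $E$ satisfies \cref{item:dax-boundary-condition} and that concatenation is legitimate when the faces $t=0,1$ are held at $\zeta$. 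Both hold because all the $f_{t,u}$ are embeddings and the homotopy is based.

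For the homomorphism property, take loops $\{f_t\}$ and $\{f'_t\}$ with fillings $H$ and $H'$. The $t$-concatenation of $H$ and $H'$ is a filling of the product loop. I expect this to be the main obstacle, because \cref{theorem:dax-additivity-inversion-composition} concerns concatenation in $u$, not in $t$. I would argue directly from \cref{definition:dax-intersection-dbint}, after perturbing $H$ and $H'$ to satisfy \cref{item:dax-boundary-interior-condition} via \cref{lemma:interior-perturb}. The double-point set of the $t$-concatenation is the disjoint union of the rescaled double-point sets of $H$ and $H'$: pairs with one point in each half have distinct $\tau$-coordinates, except along the interface $t=1/2$. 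Along that interface the track is an embedding, because it equals $\zeta\times\id$ there, so no double points occur. Choosing $V_g$ as the union of the rescaled neighborhoods, the rescaling being orientation preserving, gives
\[
  \dax(\text{$t$-concatenation of $H$ and $H'$})=\dax(H)+\dax(H').
\]
Alternatively, one could rotate coordinates in the $(t,u)$-square to convert $t$-concatenation into $u$-concatenation, which is a homotopy rel boundary conditions compatible with \cref{item:dax-boundary-condition}, and then apply \cref{theorem:dax-homotopy-invariance}.
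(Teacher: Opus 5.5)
Your proposal is correct and follows essentially the same route as the paper. Independence of the filling comes from $H*(H')^{-1}$ representing an element of $\pi_3(M)$, and independence of the loop representative comes from the adjoint of a based homotopy, whose track is an embedding and so has $\dax=0$. The homomorphism property comes from rerunning the additivity argument in the $t$-direction; your check that the track along the interface $t=\tfrac12$ is $\zeta\times\id$ supplies the detail the paper leaves implicit. The only loose point is your optional ``rotation'' remark: rotating the $(t,u)$-square is a reparametrization, not a homotopy rel $\partial I^3$, but your direct argument does not need it.
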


\begin{proof}
  It suffices to prove the conclusion for~$\phi^0_M$.
  If $H$ and $F$ are fillings for $\{f_t\}$, then $H*F^{-1}$ represents an element $\alpha\in\pi_3(M,z_0)$, and thus $\dax(H)-\dax(F)=d_\zeta(\alpha)$ by \cref{theorem:dax-additivity-inversion-composition}.
  If $[f_t] = [g_t]$ in $\pi_1^D \Emb^0(I,M)$, then choose a based homotopy and let $G\colon I^3\to M$ be the adjoint such that $G(s,t,0)=f_t(s)$, $G(s,t,1)=g_t(s)$ and the arc $s\mapsto G(s,\tau)$ is injective for all $\tau\in I^2$.
  By \cref{theorem:dax-vanishes-for-injective-track}, $\dax(G)=0$.
  Thus, if $H$ is a filling for $\{f_t\}$, then $H*G$ is a filling for $\{g_t\}$, and $\phi_M^0([g_t]) = \dax(H*G) = \dax(H)+\dax(G) = \dax(H) = \phi_M^0([f_t])$. 

  If $H$ and $F$ are fillings of loops $\{f_t\}$ and $\{g_t\}$, then the concatenation $K$ of $H$ and $F$ along the $t$ coordinate is a filling of the product loop.
  The argument of the proof of \cref{theorem:dax-additivity-inversion-composition}~\cref{item:dax-additivity}, applied to the $t$-coordinate, gives $\dax(K)=\dax(H)+\dax(F)$.
  It follows that $\phi_M^0$ is a homomorphism.
\end{proof}

When $M$ is smooth, Gabai constructed an isomorphism
\[
  d\colon \pi_1^D \Emb^s(I,M) \to \Z[\pi_1(M)\sm 1] / d_3(\pi_3(M))
\]
in~\cite[Theorem~3.7]{Gabai:2021-1}.
Note that the codomain of $d$ is identical with that of our~$\phi_M^s$, since Gabai's $d_3$ on $\pi_3(M)$ agrees with our $d_\zeta$ by \cref{corollary:dax-homomorphism-smooth-comparison}.
Furthermore, by \cref{corollary:dax-smooth-comparison}, we have the following.

\begin{corollary}
  \label{corollary:identical-smooth-dax-isomorphism}
  The map $\phi_M^s$ is identical with the smooth Dax isomorphism $d$ in \cite[Theorem~3.7]{Gabai:2021-1}.
\end{corollary}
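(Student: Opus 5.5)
The plan is to compare the two maps on a single representative loop, choosing the filling that Gabai's construction produces. Let $x\in\pi_1^D\Emb^s(I,M)$ be represented by a smooth loop $\{f_t\}$ of smooth neat embeddings based at~$\zeta$. Under the natural map to $\pi_1^D\Emb^0(I,M)$, this class is represented by the same family $\{f_t\}$, now regarded as a loop in $\Emb^0(I,M)$. The compatibility of the simplicial model with the Whitney topology (\cite[Proposition~6.4]{Gomez-Lopez-Kupers:2022-1}) guarantees that the composite map on $\pi_1$ is induced by the inclusion of spaces. So $\phi_M^s(x)=\phi_M^0([f_t])=\dax(H)$ for any filling $H$ of $\{f_t\}$.

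Gabai's $d(x)$ is defined in \cite[proof of Theorem~3.7, Step~1]{Gabai:2021-1} as follows. One chooses a smooth based nullhomotopy $\{\alpha_{t,u}\}$ of $\{f_t\}$ in the space of maps, with $\alpha_{t,1}=f_t$ and $\alpha_{t,u}=\zeta$ on the remaining faces. It is perturbed rel boundary by Dax's generic perturbation, and $d(x)$ is the class of $d(\alpha_{t,u})$ modulo $d_3(\pi_3(M))$. I would take the adjoint $H(s,t,u)=\alpha_{t,u}(s)$ as the filling. Its boundary values match those required of a filling, and it satisfies \cref{item:dax-boundary-condition}. So $H$ is a legitimate filling for $\{f_t\}$ in the sense used to define $\phi_M^0$.

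By \cref{corollary:dax-smooth-comparison}, our $\dax(H)$ is defined and equals $d(\alpha_{t,u})$. This works because the perturbed smooth family has a generic track, and \cref{lemma:dax-for-generic-track} reproduces the signed $\pi_1$-labeled double point count. By \cref{corollary:dax-homomorphism-smooth-comparison}, $d_3(\pi_3(M))=d_\zeta(\pi_3(M))$, so the two quotient groups are literally the same. Hence $\phi_M^s(x)=d(x)$.

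The main obstacle is bookkeeping rather than mathematics, and I would check two points explicitly. First, Gabai's filling convention must match ours: which face carries the loop, the ordering of the parameters $(t,u)$, and the orientation of $I^3$. Otherwise there could be a global sign, or an inversion $u\mapsto 1-u$, which by \cref{theorem:dax-additivity-inversion-composition}~\cref{item:dax-inversion} would flip the sign. Second, the identification of $\pi_1\Emb^s\to\pi_1\Emb^0$ through the simplicial model must agree with the naive inclusion on represented loops. If the conventions differ, I would reparametrize $H$ by an orientation-preserving self-map of $I^3$ fixing the relevant faces and appeal to \cref{theorem:dax-homotopy-invariance}.
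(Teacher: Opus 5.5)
Your proposal is correct and matches the paper's argument. The codomains coincide by \cref{corollary:dax-homomorphism-smooth-comparison}, and taking Gabai's perturbed nullhomotopy as the filling, \cref{corollary:dax-smooth-comparison} gives $\dax(H)=d(\alpha_{t,u})$; the convention and simplicial-model checks you list are bookkeeping that the paper leaves implicit.
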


This gives a homological interpretation of the smooth Dax isomorphism.
Together with known smoothing techniques, it can be used to yield a short alternative proof of the topological Dax isomorphism theorem of Lin, Xie and Zhang~\cite[Theorem~1.1]{Lin-Xie-Zhang:2026-2}.

\begin{theorem}
  \label{theorem:dax-isomorphism}
  For $c=t$,~$0$, the homomorphism
  \[
    \phi_M^c\colon \pi_1^D\Emb^c(I,M) \to \Z[\pi_1(M)\sm1] / d_\zeta(\pi_3(M))
  \]
  is an isomorphism.
  Furthermore, the natural maps $\pi_1 \Emb^t(I,M) \to \pi_1 \Emb^0(I,M)$ and $\pi_1^D \Emb^t(I,M) \to \pi_1^D \Emb^0(I,M)$ are isomorphisms, and consequently $\phi_M^t$ and $\phi_M^0$ agree under these identifications.
  If $M$ is smooth, the same conclusion holds with $\Emb^t(I,M)$ and $\phi_M^t$ replaced by $\Emb^s(I,M)$ and~$\phi_M^s$.
\end{theorem}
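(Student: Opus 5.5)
The strategy is to deduce everything from Gabai's smooth Dax isomorphism (\cref{corollary:identical-smooth-dax-isomorphism}), using smoothing techniques to move between categories. Throughout, the key point is that $\phi_M^0$ is defined intrinsically. Since every $\phi$ factors through $\phi_M^0$, comparison maps automatically intertwine the $\phi$'s.

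\emph{Smooth case.} Assume first that $M$ is smooth. We have the composite
\[
  \pi_1^D\Emb^s(I,M)\to\pi_1^D\Emb^t(I,M)\to\pi_1^D\Emb^0(I,M)\xrightarrow{\phi_M^0}\Z[\pi_1(M)\sm1]/d_\zeta(\pi_3(M)).
\]
By \cref{corollary:identical-smooth-dax-isomorphism}, this composite is an isomorphism. Hence the first map is injective and $\phi_M^0$ is surjective.

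It then suffices to show that $\pi_1\Emb^s(I,M)\to\pi_1\Emb^0(I,M)$ is surjective, or at least surjective on $\pi_1^D$. Granting this, the chain of maps $\pi_1^D\Emb^s\to\pi_1^D\Emb^t\to\pi_1^D\Emb^0$ consists of bijections, and $\phi_M^0$ is an isomorphism. For the full $\pi_1$ statement, I would argue directly: a loop of continuous (possibly wild) arc embeddings can be approximated by a loop of smooth embeddings. I would attempt this by first pushing the family off itself: take a polyhedral approximation of the adjoint $I\times S^1\to M$, keeping the track injective. Injectivity of the track is an open condition on the compact $I\times S^1$ only in the sense of ``$\epsilon$-close with a separation bound''. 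The plan is to use the compactness argument of \cref{lemma:dax-small-homotopy-invariance}: pairs of points with $s$-distance bounded below have images at distance at least $\delta>0$. Near the diagonal, I would use a fine subdivision in $s$ together with a general position of PL arcs in a 4-manifold, since arcs in a 1-parameter family form a $2$-dimensional object in $M\times S^1$, which has dimension 5. By general position, $2+2<5$, so a self-intersection of the track is avoidable. Smoothing of PL arcs in the smooth chart then gives smooth embeddings. Closeness also yields a homotopy through embeddings, by the same separation estimate applied to the straight-line homotopy in charts. An analogous argument applied to $2$-parameter families (of dimension $3$ in $M\times I^2$, dimension $6$, with $3+3=6$ being borderline) is \emph{not} needed, since injectivity was already obtained above.

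\emph{Topological case.} For general topological $M$, I would reduce to the smooth case by removing a point. The manifold $M'=M\sm\{pt\}$, with the point chosen in the interior, is smoothable, and the inclusion induces isomorphisms on $\pi_1$ and $\pi_2$, and a map $\pi_3(M')\to\pi_3(M)$. Arcs and their $1$- and $2$-parameter families generically avoid a point: $2$-dimensional families in dimension $4$, and $3$-dimensional nullhomotopies $H$ can be made to miss a point after a small perturbation, since $3<4$. These facts give isomorphisms $\pi_1\Emb^c(I,M')\to\pi_1\Emb^c(I,M)$. Surjectivity onto $\pi_1^D$ is obtained because fillings can be pushed off the point. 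By naturality (\cref{theorem:dax-additivity-inversion-composition}~\cref{item:dax-composition}), $\phi$ commutes with inclusion, and $d_\zeta(\pi_3(M'))$ surjects onto $d_\zeta(\pi_3(M))$ in the target, since $\pi_3(M')\to\pi_3(M)$ is surjective. Injectivity on $\pi_1^D$ must be checked through the quotient; I would handle it via the five-lemma-style diagram chase, using the smooth isomorphism on $M'$.

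\emph{Main obstacle.} I expect the main obstacle to be the approximation of loops of non-locally-flat (topological) embeddings by smooth or locally flat ones, with homotopies through embeddings, i.e.\ the surjectivity and injectivity of $\pi_1\Emb^t\to\pi_1\Emb^0$. My first attempt would be the quantitative general-position argument sketched above, carried out in the smooth structure of $M'$.
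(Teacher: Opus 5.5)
\textbf{The smooth-case deduction has a gap.} You know the composite $\pi_1^D\Emb^s(I,M)\to\pi_1^D\Emb^t(I,M)\to\pi_1^D\Emb^0(I,M)\to\Z[\pi_1(M)\sm1]/d_\zeta(\pi_3(M))$ is bijective, and you want $\pi_1^D\Emb^s\to\pi_1^D\Emb^0$ surjective. That makes $\pi_1^D\Emb^s\to\pi_1^D\Emb^0$ and $\phi_M^0$ bijective. It only makes $\pi_1^D\Emb^s\to\pi_1^D\Emb^t$ injective and $\pi_1^D\Emb^t\to\pi_1^D\Emb^0$ surjective. Nothing excludes a nontrivial kernel of $\pi_1^D\Emb^t\to\pi_1^D\Emb^0$. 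So your claim that ``the chain consists of bijections'' does not follow, and neither does the injectivity of $\phi_M^t$ or the isomorphism $\pi_1\Emb^t(I,M)\to\pi_1\Emb^0(I,M)$. You need an independent input about $\Emb^t$. The paper uses that $\pi_1\Emb^s(I,X)\to\pi_1\Emb^t(I,X)$ is surjective, i.e.\ that loops of locally flat arcs can be smoothed, quoted from \cite[Lemma~2.3]{Cha-Kim:2026-1}.

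\textbf{The approximation argument for surjectivity of $\pi_1\Emb^s\to\pi_1\Emb^0$ fails.} It breaks at the step ``closeness also yields a homotopy through embeddings''. Injectivity is not a $C^0$-open condition. Your compactness estimate only separates the images of pairs $(s,s')$ with $|s-s'|\ge\delta$. For nearby parameters, a $C^0$-small deformation, such as the straight-line homotopy from a wild arc to a PL approximation, can create double points, and this is exactly where wildness lives. General position of the approximant says nothing about the homotopy back to the original family. This approximation is the genuinely hard input, and the paper does not prove it by hand. It invokes Robinson's argument \cite{Robinson:1974-1}, which gives that $\pi_1\Emb^s\to\pi_1\Emb^0$ is an isomorphism. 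Combined with surjectivity of $\pi_1\Emb^s\to\pi_1\Emb^t$, both $\pi_1\Emb^s\to\pi_1\Emb^t$ and $\pi_1\Emb^t\to\pi_1\Emb^0$ are then isomorphisms, and a diagram chase handles $\pi_1^D$.

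\textbf{The reduction via an interior point is harder than necessary.} If you delete an interior point $q$, then $\Emb^c(I,M\sm\{q\})\ne\Emb^c(I,M)$. Pushing homotopies of loops of embeddings off $q$ is not a dimension count: already a $2$-parameter family of locally flat arcs can sweep out a set with interior. You must also arrange $\zeta$ to be smooth. The paper instead deletes $q_0\in\partial M\sm\{z_0,z_1\}$. Every arc satisfies $f^{-1}(\partial M)=\partial I$, so the embedding and mapping spaces, $\pi_1$ and $\pi_3$ are literally unchanged. Then $M\sm\{q_0\}$ is smoothed with $\zeta$ smooth by smoothing the complement of a tubular neighborhood of $\zeta$ via Freedman--Quinn.
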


\begin{proof}
  We may assume that $M$ is connected.
  Suppose $M$ is not necessarily smooth.
  We first reduce the general case to that of a smooth 4-manifold.
  Fix a point $q_0 \in \partial M \sm \{z_0,z_1\}$ and let $X=M\sm \{q_0\}$.
  We claim that $X$ has a smooth structure under which $\zeta$ is a smoothly embedded neat arc in~$X$.
  To prove this, choose a topological tubular neighborhood $\nu\zeta\cong I\times D^3$ in~$X$ with $\partial I\times D^3\subset \partial X$.
  Since the 4-manifold $X' = \overline{X\sm \nu\zeta}$ is connected and non-compact, $X'$ has a smooth structure by~\cite[Theorem~8.2]{Freedman-Quinn:1990-1}.
  We may assume that it restricts to the standard smooth structure on $I\times \partial D^3 \subset \partial X'$, since a smooth structure on a 3-manifold is unique up to isotopy.
  Thus the smooth structure on $X'$ and the standard smooth structure on $I\times D^3$ give rise to a smooth structure on~$X$, under which $\zeta$ is a smooth neat embedding.

  Note that, for $c=t$,~$0$, we have $\Emb^c(I,X)=\Emb^c(I,M)$ and $\Map(I,X) = \Map(I,M)$ and thus $\pi_1^D\Emb^c(I,X)=\pi_1^D\Emb^c(I,M)$.
  The inclusion induces the identifications $\pi_1(X)=\pi_1(M)$, $\Z[\pi_1(X)\sm 1] = \Z[\pi_1(M)\sm 1]$, $\pi_3(X)=\pi_3(M)$.
  Under these identifications, the definitions give $d_{X,\zeta} = d_{M,\zeta}$ and $\phi_X^c = \phi_M^c$.
  Therefore, it suffices to prove the conclusions for~$X$ instead of~$M$.

  Consider the following commutative diagram.
  \begin{equation*}
    \begin{tikzcd}[row sep=2ex,column sep=2em]
      \vphantom{\dfrac -I}
      & \pi_1^D \Emb^s(I,X) \ar[r,hook] \ar[d,"\beta^{s,t}"]
        \ar[ldd,"\phi_X^s"',start anchor=south west]
      & \pi_1 \Emb^s(I,X) \ar[r] \ar[d,"\alpha^{s,t}"]
      & \pi_1 \Map(I,X) \ar[d,equal]
      \\
      \vphantom{\dfrac II}
      & \pi_1^D \Emb^t(I,X) \ar[r,hook] \ar[d,"\beta^{t,0}"]
        \ar[ld,"\phi_X^t"' xshift=.25em,start anchor=south west]
      & \pi_1 \Emb^t(I,X) \ar[r] \ar[d,"\alpha^{t,0}"]
      & \pi_1 \Map(I,X) \ar[d,equal]
      \\
      \dfrac{\Z[\pi_1(M)\sm 1]}{d_\zeta(\pi_3(M))}
      & \pi_1^D \Emb^0(I,X) \ar[r,hook] \ar[l,"\phi_X^0"']
      & \pi_1 \Emb^0(I,X) \ar[r]
      & \pi_1 \Map(I,X)
    \end{tikzcd}
  \end{equation*}

  We claim that all the vertical arrows $\alpha^{s,t}$, $\alpha^{t,0}$, $\beta^{s,t}$, $\beta^{t,0}$ are isomorphisms.
  Indeed, it is known that $\alpha^{s,t}$ is surjective, e.g., see~\cite[Lemma~2.3]{Cha-Kim:2026-1}.
  The composition $\alpha^{t,0}\circ \alpha^{s,t}\colon \pi_1 \Emb^s(I,X) \to \pi_1 \Emb^0(I,X)$ is an isomorphism by Robinson's argument~\cite{Robinson:1974-1}.
  We note that Robinson states the result for closed manifolds, but her argument also applies to our nonempty boundary setup.
  Thus, $\alpha^{s,t}$ is injective and $\alpha^{t,0}$ is surjective, and consequently $\alpha^{s,t}$ and $\alpha^{t,0}$ are isomorphisms.
  By diagram chasing, it also follows that $\beta^{s,t}$ and $\beta^{t,0}$ are isomorphisms.

  By \cref{corollary:identical-smooth-dax-isomorphism} and \cite[Theorem~3.7]{Gabai:2021-1}, $\phi_X^s$ is an isomorphism.
  From this and from the above claim, it follows that $\phi_X^t$ and $\phi_X^0$ are isomorphisms.

  If $M$ is smooth, use $M$ as $X$ in the above argument, to obtain the desired conclusions.
\end{proof}

\section{Dax invariant of disks}
\label{section:dax-for-disks}

In this section, we apply our homological approach to define an invariant $\Dax(D_0,D_1)$ of embedded disks $D_0$,~$D_1$ homotopic rel boundary in a topological 4-manifold~$M$.

In~\cite{Cha-Kim:2023-1}, the topological Dax invariant $\Dax_{M,\Gamma}(D_0,D_1)$ was developed for locally flat embedded disks, using the stable smoothing theorem for disks~\cite[Theorem~D]{Cha-Kim:2023-1} and smooth category results in~\cite{Gabai:2021-1,Kosanovic-Teichner:2024-2}.
In particular, the construction of their $\Dax_{M,\Gamma}(D_0,D_1)$ depends on the choice of an almost smooth structure $\Gamma$ of~$M$ \cite[Definitions~4.6 and 4.13]{Cha-Kim:2023-1}.

Our homological approach provides a completely topological interpretation.
From this, it follows that the value of $\Dax_{M,\Gamma}(D_0,D_1)$ is independent of the choice~$\Gamma$.

\begin{corollary}
  \label{corollary:dax-disk-smooth-structure-independence}
  Let $\Gamma$,~$\Gamma'$ be almost smooth structures on~$M$ and let $D_0$,~$D_1$ be locally flat neat disks in $M$ with common boundary.
  Whenever the Dax invariants $\Dax_{M,\Gamma}(D_0,D_1)$ and $\Dax_{M,\Gamma'}(D_0,D_1)$ are defined, they agree.
\end{corollary}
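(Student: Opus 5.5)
The plan is to show that $\Dax_{M,\Gamma}(D_0,D_1)$ equals the intrinsically topological invariant $\Dax(D_0,D_1)$ defined in \cref{subsection:dax-for-disk-definition-properties}, for every almost smooth structure $\Gamma$ for which the left side is defined. This is the content of \cref{theorem:dax-disk-topological}. Once it is established, the corollary is immediate: $\Dax_{M,\Gamma}(D_0,D_1)=\Dax(D_0,D_1)=\Dax_{M,\Gamma'}(D_0,D_1)$. The right-hand side only involves the topological data $D_0$, $D_1$, $M$ (together with the choices built into the topological definition, such as the base arc or dual), so it is independent of $\Gamma$.

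To prove the comparison, I will unwind the construction of~\cite{Cha-Kim:2023-1}. One uses $\Gamma$ and the stable smoothing theorem~\cite[Theorem~D]{Cha-Kim:2023-1} to replace $D_0$, $D_1$ by disks that are smooth with respect to $\Gamma$, possibly after stabilization, inside a smooth codimension-zero submanifold $M_\Gamma\subset M$ missing the singular point. One then chooses a homotopy rel boundary between them, converts it to a map $H\colon I^3\to M_\Gamma$ satisfying the smooth analogue of \cref{item:dax-boundary-condition}, and takes the smooth $\dax(H)$, modulo the indeterminacy coming from $d_\zeta(\pi_3)$ and the Kosanovi\'c--Teichner symmetry terms.

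By \cref{corollary:dax-smooth-comparison}, the smooth $\dax(H)$ computed in $M_\Gamma$ equals our homological $\dax_{M_\Gamma}(H)$. By \cref{theorem:dax-additivity-inversion-composition}~\cref{item:dax-composition} applied to the inclusion $M_\Gamma\hookrightarrow M$, which is $\pi_1$-isomorphic, this equals $\dax_M(H)$. Likewise, \cref{corollary:dax-homomorphism-smooth-comparison} identifies the smooth $d_\zeta$ with ours, so the quotient groups agree.

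It remains to check that the topological definition of $\Dax(D_0,D_1)$, applied to the original disks, produces the same class as this smoothed computation. The smoothing step replaces $D_i$ by isotopic disks; these are possibly non-locally-flat isotopies within the topological category. I will use the isotopy invariance \cref{theorem:dax-disk-isotopy} to absorb this replacement. The stabilization step must also be checked not to change the invariant. I expect stabilization to be handled by naturality, again via \cref{theorem:dax-additivity-inversion-composition}~\cref{item:dax-composition}, with extra double points having trivial $\pi_1$-label.

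The main obstacle is this last matching. It requires the precise form of the disk invariant of~\cite{Cha-Kim:2023-1}, including the symmetrization and the way the homotopy is parametrized into an $I^3$-family, to align exactly with the topological definition in \cref{subsection:dax-for-disk-definition-properties}. I would try to reduce it to a single chosen homotopy $H$ used on both sides, so that the equality comes down to \cref{corollary:dax-smooth-comparison} alone.
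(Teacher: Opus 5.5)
Your reduction is the same as the paper's: the corollary follows at once from \cref{theorem:dax-disk-topological}, i.e.\ $\Dax_{M,\Gamma}(D_0,D_1)=\Dax(D_0,D_1)$. The gap is in your sketch of that comparison, at the step you yourself flag as the main obstacle. The smoothed disks $D_i'$ from \cite[Theorem~D]{Cha-Kim:2023-1} do not lie in a submanifold of $M$. They lie in the stabilization $M'=(M\sm p_0)_\Gamma\csum k(S^2\times S^2)$, which neither embeds in $M$ nor contains it. So there is no inclusion ``$M_\Gamma\hookrightarrow M$'' through which \cref{theorem:dax-additivity-inversion-composition}~\cref{item:dax-composition} can carry the smooth computation over to $M$, and naturality alone cannot dispose of the stabilization.

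Invoking \cref{theorem:dax-disk-isotopy} for the isotopies $D_i\simeq D_i'$ does not help either. These isotopies live in $M'$, so the disk-level statement only gives equality modulo $d_{\zeta_+}(\pi_3(M'))$. That subgroup contains $d_{\zeta_+}(\pi_3(M))$ but is possibly strictly larger, because $\pi_3(M')$ has new classes coming from the $S^2\times S^2$ summands. Both invariants, however, are classes modulo the smaller subgroup $d_{\zeta_+}(\pi_3(M))$.

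The missing idea is to compare at the level of $\dax$ in $\Z[\pi_1(M)\sm1]^\sigma$, before taking any quotient, using a homotopy that lives in a region common to $M$ and $M'$. After isotoping $D_0,D_1$ off a small smooth ball $B$ and off $p_0$, choose $H\colon D_0\simeq D_1$ rel $\partial$ in $M\sm(B\cup\{p_0\})$. This is exactly how the invariant of \cite{Cha-Kim:2023-1} is set up: it is the smooth count for $F=h_0^{-1}*H*h_1$ in $M'$, where $h_i\colon D_i\simeq D_i'$ are the isotopies. The argument then runs as follows.
\begin{enumerate}
\item By \cref{corollary:dax-smooth-comparison}, that smooth count equals our $\dax_{M'}(F)$.
\item Additivity gives $\dax_{M'}(F)=-\dax(h_0)+\dax(H)+\dax(h_1)$.
\item $\dax(h_i)=0$ by \cref{theorem:dax-vanishes-for-injective-track}, since an isotopy has an embedded track. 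There are no ``extra double points'' at all, trivially labeled or otherwise.
\item $\dax_{M'}(H)=\dax_M(H)$, because $H$ lies in $M\sm(B\cup\{p_0\})$, which includes $\pi_1$-isomorphically into both $M$ and $M'$.
\end{enumerate}
For the target group, use \cref{lemma:identification-disk-dax-target}; note that the Cha--Kim homomorphism is defined on $\pi_3(M\sm p_0)$ and descends to $\pi_3(M)$. Together these give the theorem, and hence the corollary.
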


Furthermore, our approach gives an extension to non-locally flat disks and invariance under non-locally flat isotopy.

We begin with a symmetry property of $\dax(H)$, which plays an essential role in the definition of the Dax invariant of disks.

\subsection{Symmetry of Dax invariants}
\label{subsection:symmetry}

Let $\sigma$ be the linear involution of $\Z[\pi_1(M)\sm 1]$ defined by $\sigma(g)=g^{-1}$ for $g\in \pi_1(M)\sm \{1\}$.
Let $\Z[\pi_1(M)\sm 1]^\sigma \subset \Z[\pi_1(M)\sm 1]$ be the subgroup of all elements fixed by~$\sigma$.
For $H\colon I^3 \to M$, define the \emph{reduced track} $\widehat H\colon I^3 \to M\times I$ by $\widehat H(s,t,u)=(H(s,t,u),u)$.

\begin{theorem}[Symmetry]
  \label{theorem:dax-symmetry}
  Suppose that $H\colon I^3\to M$ satisfies~\cref{item:dax-boundary-condition}, $\widehat H(\partial I^3) \subset \partial(M\times I)$, and $\widehat H(\partial I^3) \subset M\times I$ is $\pi_1$-null, meaning that the inclusion induces a trivial map on~$\pi_1$.
  Then $\dax(H) \in \Z[\pi_1(M)\sm 1]^\sigma$.
\end{theorem}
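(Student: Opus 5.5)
The plan is to reorganize $\dax(H)$ as an intersection number over the full square $I^3\times I^3$, rather than over $P$ alone, and then move the intersection condition off to infinity. By \cref{lemma:interior-perturb} and \cref{theorem:dax-homotopy-invariance} I first arrange that $H$ satisfies \cref{item:dax-boundary-interior-condition} and sends $\inte(I^3)$ to $\inte(M)$. The same perturbation $f_{\lambda(x)}$ preserves both the hypothesis $\widehat H(\partial I^3)\subset\partial(M\times I)$ and the $\pi_1$-nullity of $\widehat H(\partial I^3)$, since it is the identity on $\partial I^3$.

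\emph{Labels.} Because $\widehat H(\partial I^3)$ is $\pi_1$-null, I can attach a $\pi_1$-label to \emph{every} pair $(x,y)$ with $H(x)=H(y)$, not only to pairs with $\tau(x)=\tau(y)$. Set
\[
  \lambda(x,y)=\bigl[H(\gamma_x)\cdot H(\gamma_y)^{-1}\bigr]\in\pi_1(M),
\]
where $\gamma_x$ is any path in $I^3$ from $(0,\tau(x))$ to $x$, preceded by the constant path at $z_0$. This is well defined because $I^3$ is simply connected. On pairs in $\Phi_H^{-1}(\Delta)$ it agrees with $g_H$, and it satisfies $\lambda(y,x)=\lambda(x,y)^{-1}$. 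The $\pi_1$-null hypothesis enters where pairs approach $\partial I^3$: there the label must be trivial, or at least locally constant across the boundary. I intend to use it together with $\widehat H(\partial I^3)\subset\partial(M\times I)$ to show that pairs with nontrivial label stay in a compact subset of $\inte(I^3\times I^3)$ throughout the deformation below.

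\emph{Construction.} Let
\[
  \Psi_c\colon I^3\times I^3\to (M\times I)^2\times\R,\qquad
  \Psi_c(x,y)=\bigl(\widehat H(x),\widehat H(y),\,t(x)-t(y)-c\bigr).
\]
Then $\Psi_0^{-1}(\Delta_{M\times I}\times 0)$ is exactly $\Phi_H^{-1}(\Delta)\cup\mathrm{swap}\bigl(\Phi_H^{-1}(\Delta)\bigr)$. As in \cref{subsection:homological-definition}, I separate the points with label $g$ by compact codimension-zero submanifolds $V'_g$, and define $m(g)$ as the pairing of the Thom class of $\Delta_{M\times I}\times 0$ with $(\Psi_0|_{V'_g})_*[V'_g]$. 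Pairs with $s(x)<s(y)$ contribute $n(g)$. By the swap, pairs with $s(x)>s(y)$ contribute $\pm n(g^{-1})$ to the label $g$. The sign comes from the swap of $I^3\times I^3$, which has degree $(-1)^9=-1$, together with the swap on $(M\times I)^2$ and on $\R$ (via $t\mapsto -t$). I will check in the local model of \cref{lemma:dax-for-generic-track} that the total sign is $-1$, so that
\[
  \sum_g m(g)\,g=\pm\bigl(\dax(H)-\sigma\dax(H)\bigr).
\]
Pairs on the diagonal $x=y$ and pairs with $s(x)=s(y)$ have label $1$, so they are irrelevant.

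\emph{Deformation.} Now I move $c$ from $0$ to $2$. For $c=2$ the preimage is empty because $|t(x)-t(y)|\le 1$, so every $m(g)$ vanishes. Along the deformation, the set of pairs with label $g\neq1$ varies, but by the locally-constant argument of \cref{lemma:dax-small-homotopy-invariance}, applied in the parameter $c$, the pairing is locally constant in $c$. This works provided nontrivial-label preimages never reach $\partial(I^3\times I^3)$, and that is the main obstacle. At a boundary point $x\in\partial I^3$ with $H(x)=H(y)$ and $u(x)=u(y)$, the hypothesis forces $u(x)\in\{0,1\}$ or forces the pair to lie over $\{0,1\}\times I^2$. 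Near $u\in\{0,1\}$ both points lie in $\widehat H(\partial I^3)$, which is $\pi_1$-null, so the label is $1$; here I would compute $\lambda$ using paths inside $\partial I^3$. The faces $s\in\{0,1\}$ map to $z_0,z_1$, where \cref{item:dax-boundary-condition} gives injectivity in $s$ along $\partial I^2$. Once this boundary control is settled, a compactness argument gives pairwise disjoint $V'_g$ that work uniformly for all $c\in[0,2]$. Homotopy invariance of the relative pairing then yields $\dax(H)-\sigma\dax(H)=0$, that is, $\dax(H)\in\Z[\pi_1(M)\sm 1]^\sigma$.
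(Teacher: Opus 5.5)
Your strategy is the paper's up to reparametrization. The paper deforms to $\Phi_r(x,y)=(\overline H{}^+_r(x),\overline H{}^-_r(y))$, pushing the two factors into the slices $t=\tfrac34$ and $t=\tfrac14$. Its $\Delta$-preimage is $\{(x,y)\in Z : t(x)-t(y)=-r/(2(1-r))\}$ with $Z=\{\widehat H(x)=\widehat H(y)\}$, so it is your family $\Psi_c$ run over $c\le 0$ instead of $c\in[0,2]$. Your sign bookkeeping also matches the paper's: the swap of $I^3\times I^3$ has degree $-1$, and the swap of the target fixes the Thom class because it acts by $-1$ on a $6$-dimensional normal fiber. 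The problem is in the step you yourself call the crux. Your case analysis of boundary pairs is false as stated, and the remedy you propose is applied only to a subcase.

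A pair $(x,y)\in Z$ meeting $\partial(I^3\times I^3)$ need not have $u(x)\in\{0,1\}$ or lie over $\{0,1\}\times I^2$. The hypothesis $\widehat H(\partial I^3)\subset\partial(M\times I)$ forces $H$ to map the faces $t=0$ and $t=1$ into $\partial M$. But \textup{(D)} only makes each individual arc $s\mapsto H(s,\tau)$, $\tau\in\partial I^2$, injective. So an arc on the face $t=1$ and an arc on the face $t=0$ at the same level $u\in(0,1)$ may cross at a point of $\partial M$ with $s$-coordinates in $(0,1)$. Such a pair has $t(x)-t(y)=1$, so your deformation meets it at $c=1$, on $\partial(I^3\times I^3)$.

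What you need is triviality of the label on all of $Z\cap\partial(I^3\times I^3)$. Mere local constancy near the boundary would not stop label-$g$ preimages from escaping through the boundary during the deformation. Triviality follows without case analysis. Since $\widehat H(\inte I^3)\subset\inte(M\times I)$ and $\widehat H(\partial I^3)\subset\partial(M\times I)$, a pair in $Z$ with one coordinate in $\partial I^3$ has both coordinates in $\partial I^3$. Choose $\gamma_x,\gamma_y$ inside $\partial I^3$, starting at $(0,\tau(x))$ and $(0,\tau(y))$. Because $u(x)=u(y)$, both start points map to $(z_0,u(x))$. Hence $\widehat H(\gamma_x)\cdot\widehat H(\gamma_y)^{-1}$ is a closed loop in $\widehat H(\partial I^3)$, which is null in $M\times I$ by hypothesis, and its projection to $M$ represents the label.

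Consequently, for $g\ne 1$ the label-$g$ part of $Z$ is a compact subset of $\inte(I^3\times I^3)$. A single $W_g$ with $W_g\cap Z$ equal to it then works for every $c$ simultaneously, since $\Psi_c^{-1}(\Delta_{M\times I}\times 0)\subset Z$. No appeal to the small-homotopy lemma is needed. Note also that the $\pi_1$-null hypothesis is not what makes the labels defined; they exist for every pair with $H(x)=H(y)$. This boundary triviality is its only role.
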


In the definition of $\dax(H)$, we used the subspace $P\subset I^3\times I^3$ to count only one of $(x,y)$ and $(y,x)$ for which $\overline H(x)=\overline H(y)$ holds.
If the whole domain $I^3\times I^3$ were used, it would give the $\pi_1$-labeled intersection of $\overline H$ and~$\overline H$ that counts both $(x,y)$ and $(y,x)$;
by investigating the vanishing of this intersection and its relationship with $\dax(H)$, we obtain the desired symmetry.
We remark that this approach was already applied to the Dax invariant in the smooth category in~\cite{Kosanovic-Teichner:2024-2}.
Our treatment below may be viewed as an adaptation to the topological category.

\begin{proof}[Proof of \cref{theorem:dax-symmetry}]
  By \cref{lemma:interior-perturb,theorem:dax-homotopy-invariance}, after a homotopy rel~$\partial I^3$, we may assume that $H(\inte(I^3))\subset\inte(M)$.
  Note that this implies $\widehat H(\inte(I^3)) \subset \inte(M\times I)$.

  For $\epsilon\in \{+,-\}$ and $0\le r\le 1$, define $\overline H{}^\epsilon_r\colon I^3 \to M\times I^2$ by
  \[
    \overline H{}_r^\epsilon(s,t,u) = (H(s,t,u),(1-r)\cdot t+r\cdot (\tfrac12+\epsilon\cdot\tfrac14),u).
  \]
  Note that $\overline H{}_r^+$ and $\overline H{}_r^-$ are deformations of $\overline H$ into the slices $M\times\tfrac34\times I$, $M\times\tfrac14\times I \subset M\times I^2$.

  To define homological intersection of $\overline H{}_r^+$ and $\overline H{}_r^-$ and relate it to the self-intersection of $\overline H = \overline H{}_0^+ = \overline H{}_0^-$, we proceed as follows.

  Let $Z=\{(x,y)\in I^3\times I^3:\widehat H(x)=\widehat H(y)\}$, a compact set.
  Define $\ell\colon Z\to \pi_1(M)$ by
  \[
    \ell(x,y)= \bigl[ H|_{[0,s(x)]\times\tau(x)} \cdot (H|_{[0,s(y)]\times\tau(y)})^{-1} \bigr] \in \pi_1(M).
  \]
  Since sufficiently close based loops are homotopic, $\ell$ is locally constant on~$Z$.

  We claim that $\ell=1$ on $Z\cap\partial(I^3\times I^3)$.
  Indeed, if $(x,y)\in Z\cap\partial(I^3\times I^3)$, then $x$,~$y\in \partial I^3$, since $\widehat H(\inte(I^3))\subset \inte(M\times I)$ and $\widehat H(\partial I^3) \subset \partial(M\times I)$.

  Choose paths $\alpha_x$,~$\alpha_y$ in $\partial I^3$ from $(0,\tau(x))$,~$(0,\tau(y))$ to $x$,~$y$, respectively.
  Since $I^3$ is simply connected, $\alpha_x$ and $[0,s(x)]\times\tau(x)$ are homotopic in $I^3$ rel endpoints, and similarly for $\alpha_y$ and $[0,s(y)]\times\tau(y)$.
  Thus the loop $(\widehat H\alpha_x)(\widehat H\alpha_y)^{-1}$ represents $\ell(x,y)$ in $\pi_1(M\times I)=\pi_1(M)$.
  It follows that $\ell(x,y)=1$, since $(\widehat H\alpha_x)(\widehat H\alpha_y)^{-1}$ lies in the $\pi_1$-null subset $\widehat H(\partial I^3) \subset M\times I$.

  Fix $g\ne1$.
  Then $\ell^{-1}(g) \subset \inte(I^3\times I^3)$ by the above claim.
  Recall that we have chosen $V_g\subset \inte(P)$ to define $\dax(H)$, using the fact that $g_H^{-1}(g)\subset \inte(P)$ and $g_H$ is locally constant.
  By the same argument applied to $\ell$ in place of $g_H$, choose a compact codimension-zero submanifold $W_g\subset\inte(I^3\times I^3)$ such that $W_g\cap Z=\ell^{-1}(g)\subset\inte(W_g)$.
  Let $[W_g]\in H_6(W_g,\partial W_g)$ be the fundamental class induced by the orientation of $I^3\times I^3$.

  Define $\Phi_r \colon I^3\times I^3 \to (M\times I^2)^2$ by $\Phi_r(x,y)=(\overline H{}_r^+(x), \overline H{}_r^-(y))$.
  Since $\Phi_r^{-1}(\Delta)\subset Z$ and $\partial W_g\cap Z=\emptyset$, $\Phi_r$ restricts to a homotopy of maps $(W_g,\partial W_g) \to ((M\times I^2)^2, (M\times I^2)^2 \sm \Delta)$.
  Let $u_\Delta\in H^6((M\times I^2)^2, (M\times I^2)^2\sm\Delta)$ be the Thom class for $\Delta$, which we use to define $\dax(H)$.
  For $r=1$, $\overline H{}_1^+$ and $\overline H{}_1^-$ have disjoint images, and thus $\Phi_1$ avoids~$\Delta$.
  It follows that $(\Phi_1|_{W_g})_*[W_g] = 0$.
  Furthermore, by homotopy invariance, we have
  \[
    \bigl\langle u_\Delta, (\Phi_0|_{W_g})_*[W_g] \bigr\rangle
    = \bigl\langle u_\Delta, (\Phi_1|_{W_g})_*[W_g] \bigr\rangle = 0.
  \]

  Suppose $g\ne g^{-1}$.
  Let $S$ and $T$ denote the maps $(a,b)\mapsto (b,a)$ on $I^3\times I^3$ and $(M\times I^2)^2$, respectively.
  Note that $\Phi_0|_P=\Phi_H$ and $\ell(y,x)=\ell(x,y)^{-1}$.
  Furthermore, every $(x,y)\in W_g\cap\Phi_0^{-1}(\Delta)$ satisfy $s(x)\ne s(y)$; 
  otherwise we would have $x=y$ and $\ell(x,y)=1$.
  From this, it follows that $W_g\cap\Phi_0^{-1}(\Delta) = g_H^{-1}(g)\sqcup S(g_H^{-1}(g^{-1}))$.
  Recall that, in the definition of $\dax(H)$, $V_g$ may be chosen in any given open neighborhood of $g_H^{-1}(g)$ in~$\inte(P)$.
  Using this, we may choose $V_g$ and $V_{g^{-1}}$ so that $V_g$ and $S(V_{g^{-1}})$ are disjoint and $V_g\sqcup S(V_{g^{-1}})\subset\inte(W_g)$.
  Then we have
  \[
    \begin{aligned}
      0 &= \bigl\langle u_\Delta, (\Phi_0|_{W_g})_*[W_g] \bigr\rangle
      \\
      &= \bigl\langle u_\Delta, (\Phi_0|_{V_g})_*[V_g] \bigr\rangle
        +\bigl\langle u_\Delta, (\Phi_0|_{S(V_{g^{-1}})})_*[S(V_{g^{-1}})] \bigr\rangle
      \\
      &= \bigl\langle u_\Delta, (\Phi_H|_{V_g})_*[V_g] \bigr\rangle
        -\bigl\langle u_\Delta, (\Phi_H|_{V_{g^{-1}}})_*[V_{g^{-1}}] \bigr\rangle
      = -n(g)+n(g^{-1}),
    \end{aligned}
  \]
  where the second equality follows by excision and the third from the facts that $\Phi_0\circ S=T\circ\Phi_0$, $S$ is orientation reversing, and $T^*u_\Delta = u_\Delta$.
  Therefore, $\dax(H)=\sigma\dax(H)$.
\end{proof}

\subsubsection*{Symmetry of the Dax homomorphism of~\texorpdfstring{$\pi_3(M)$}{{\pi₃(M)}}}

Let $\zeta\colon I\to M$ be an embedding with $\zeta(s)=z_s$ for $s=0,1$.
Recall that we have defined the homomorphism $d_\zeta \colon \pi_3(M) \to \Z[\pi_1(M)\sm 1]$ in \cref{definition:dax-homomorphism}.

\begin{corollary}
  \label{corollary:dax-homomorphism-symmetry}
  Suppose $\zeta\colon I \to \partial M$ is an embedding into $\partial M$.
  Then the homomorphism $d_\zeta$ of $\pi_3(M)$ takes values in $\Z[\pi_1(M)\sm 1]^\sigma$.
\end{corollary}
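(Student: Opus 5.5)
The plan is to apply \cref{theorem:dax-symmetry} directly to a representative $H$ of an arbitrary element of $\pi_3(M)$. Recall that such a representative satisfies $H(s,\tau)=\zeta(s)$ for $(s,\tau)\in I\times\partial I^2$ and $H(s,\tau)=z_s$ for $s\in\{0,1\}$. As noted after \cref{definition:dax-homomorphism}, $H$ satisfies \cref{item:dax-boundary-condition}. So only the two hypotheses on the reduced track $\widehat H(s,t,u)=(H(s,t,u),u)$ need to be verified.

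First, we check $\widehat H(\partial I^3)\subset\partial(M\times I)$. Let $x=(s,t,u)\in\partial I^3$. There are two cases.
\begin{itemize}
  \item[(i)] If $s\in\{0,1\}$, then $H(x)=z_s\in\partial M$.
  \item[(ii)] If $(t,u)\in\partial I^2$, then $H(x)=\zeta(s)$, which lies in $\partial M$ because $\zeta$ maps into $\partial M$ by hypothesis.
\end{itemize}
In either case $\widehat H(x)\in\partial M\times I\subset\partial(M\times I)$. This is exactly where the hypothesis that $\zeta$ lies in $\partial M$ is used; for a neat arc, case (ii) would fail.

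Second, we check that $\widehat H(\partial I^3)$ is $\pi_1$-null in $M\times I$. By the two cases above, $\widehat H(\partial I^3)$ is contained in
\[
  \{(\zeta(s),u): s,u\in I\}\cup\bigl(\{z_0,z_1\}\times I\bigr),
\]
and the second piece is already contained in the first since $\zeta(0)=z_0$ and $\zeta(1)=z_1$. Thus $\widehat H(\partial I^3)\subset\zeta(I)\times I$, which is contractible. Hence the inclusion of any path component of $\widehat H(\partial I^3)$ into $M\times I$ induces the trivial map on $\pi_1$, because it factors through $\zeta(I)\times I$.

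With both hypotheses verified, \cref{theorem:dax-symmetry} gives $d_\zeta([H])=\dax(H)\in\Z[\pi_1(M)\sm 1]^\sigma$. No step is a real obstacle. The only point needing care is confirming that the $\pi_1$-null condition is meant for the inclusion of the subset $\widehat H(\partial I^3)$, and factoring through a contractible set settles this.
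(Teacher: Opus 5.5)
Your proposal is correct and follows essentially the same route as the paper: apply \cref{theorem:dax-symmetry} to a representative $H$, using $\widehat H(\partial I^3)\subset\zeta(I)\times I\subset\partial M\times I\subset\partial(M\times I)$ together with the contractibility of $\zeta(I)\times I\cong I^2$. The paper notes that this containment is in fact an equality (take $t=0$), but containment already suffices for the $\pi_1$-null hypothesis, as you observe.
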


\begin{proof}
  Let $H\colon I^3\to M$ be a representative of an element in~$\pi_3(M)$.
  Then $\widehat H(\partial I^3) =\zeta(I)\times I \subset\partial M\times I \subset\partial(M\times I)$ and $\zeta(I)\times I\cong I^2$ is contractible.
  It follows that $\dax(H) \in \Z[\pi_1(M)\sm 1]^\sigma$ by \cref{theorem:dax-symmetry}.
\end{proof}

\subsection{Dax invariant of disks}
\label{subsection:dax-for-disk-definition-properties}

Let $D_0$,~$D_1\colon D^2\hookrightarrow M$ be neat embeddings which agree on $\partial D^2$ and are homotopic rel~$\partial D^2$.
Fix a smooth quotient map $\varphi\colon I^2\to D^2$ whose restriction to the interiors is an orientation-preserving diffeomorphism and such that the maps $s\mapsto\varphi(s,t)$, $t\in I$, are smooth embeddings of arcs from $(-1,0)$ to $(1,0)$ that foliate~$D^2$.
View $D_i$ as a map $I^2\to M$ by composing with~$\varphi$.

Let $z_0 = D_i(0,t)$, $z_1 = D_i(1,t)$.
Let $\zeta_-(s)=D_i(s,0)$ and $\zeta_+(s)=D_i(s,1)$, the common lower and upper semicircles of~$D_i$.
Note that $d_{\zeta_+}(\pi_3(M)) \subset \Z[\pi_1(M)\sm 1]^\sigma$ by \cref{corollary:dax-homomorphism-symmetry}.

Choose a homotopy $H\colon I^3\to M$ from $D_0$ to $D_1$ rel~$\partial I^2$.
Then $H$ satisfies~\cref{item:dax-boundary-condition}.
Moreover, $\widehat H(\partial I^3) = (D_0\times 0) \cup (\partial D_0 \times I) \cup (D_1\times 1) \subset \partial(M\times I)$ is an embedded 2-sphere, which is simply connected.
Thus, by \cref{theorem:dax-symmetry}, $\dax(H) \in \Z[\pi_1(M)\sm 1]^\sigma$.

\begin{definition}
  \label{definition:dax-disks}
  Define the \emph{Dax invariant} of $(D_0,D_1)$ by
  \[
    \Dax(D_0,D_1) = \dax(H) \in\Z[\pi_1(M)\sm 1]^\sigma / d_{\zeta_+}(\pi_3(M)).
  \]
\end{definition}

We note that the disks $D_0$,~$D_1$ are \emph{not required to be locally flat} in \cref{definition:dax-disks}.

With our homological definition of $\dax(H)$ and its properties established above, one can verify that $\Dax(D_0,D_1)$ is independent of the choice of~$H$ using arguments in \cite[Section~4]{Gabai:2021-1} and \cite[Section~4]{Kosanovic-Teichner:2024-2}.
For concreteness, we give details below.

\begin{lemma}
  The value $\Dax(D_0,D_1)$ is well-defined, independent of the choice of~$H$.
\end{lemma}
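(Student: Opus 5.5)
We compare two choices of homotopy. Let $H$ and $F$ be homotopies rel~$\partial I^2$ from $D_0$ to $D_1$, both viewed as maps $I^3\to M$ with $H(s,t,0)=F(s,t,0)=D_0(s,t)$ and $H(s,t,1)=F(s,t,1)=D_1(s,t)$. The concatenation $K=H*F^{-1}$ is defined, satisfies~\cref{item:dax-boundary-condition}, and by \cref{theorem:dax-additivity-inversion-composition} we have $\dax(K)=\dax(H)-\dax(F)$. So it suffices to show that $\dax(K)\in d_{\zeta_+}(\pi_3(M))$.

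The map $K$ is a self-homotopy of $D_0$ rel~$\partial I^2$. It agrees with $D_0$ on $I^2\times\{0,1\}$, and for $\tau\in\partial I^2$ with $t\in\{0,1\}$ it is the constant family $\zeta_\mp$. It is not yet a representative of an element of $\pi_3(M)$ in the sense of \cref{subsection:dax-homomorphism-pi_3}, because on the faces $u=0,1$ it equals $D_0$ rather than $\zeta_+$.

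I would fix this by conjugating with a sweep of $D_0$. Let $E\colon I^3\to M$ be $E(s,t,u)=D_0(s,\max(t,u))$, say, reparametrized so that it is a homotopy through arcs from the constant family $\zeta_+$ to $D_0$. Every slice $s\mapsto E(s,\tau)$ is an arc of the embedded disk $D_0$, so $E$ has injective arcs, and by \cref{theorem:dax-vanishes-for-injective-track} we get $\dax(E)=0$. The concatenation $E*K*E^{-1}$ then equals $\zeta_+$ on the faces $u=0,1$. On the faces $t=0,1$ it should equal $\zeta_\pm$, and these are not both $\zeta_+$, so some care is needed.

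The cleaner route is to regard $I^2$ in the $(t,u)$-variables as a square whose whole boundary maps to arcs of $D_0$. A reparametrization of the $(t,u)$-square that pulls $\partial I^2$ into the top edge then turns $E*K*E^{-1}$ (or a direct squeezing construction) into a map $L$ agreeing with $\zeta_+\times\id$ on $I\times\partial I^2$. Such reparametrizations change the $\tau$-coordinates only through a self-map of $I^2$, and I would check that $\dax$ is unchanged under a reparametrization of $\tau$ that is a homeomorphism on the relevant region. Alternatively, I would homotope rel~$\partial I^3$ so that the collar parts have injective arcs, and argue via excision as in the proof of \cref{theorem:dax-additivity-inversion-composition}.

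Once this is done, additivity gives $\dax(L)=\dax(E)+\dax(K)-\dax(E)=\dax(K)$, and $L$ represents some $\alpha\in\pi_3(M)$, so $\dax(K)=d_{\zeta_+}(\alpha)$. This is the argument of Gabai and Kosanović--Teichner.

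The main obstacle is the bookkeeping in this reshaping step. Concatenation in the paper is only along the $u$-coordinate, and both the lower boundary $\zeta_-$ and the $t$-sides must be collapsed to $\zeta_+$ using slices of $D_0$, which are injective arcs. I would handle it by writing $L$ explicitly piecewise on a subdivision of the $(t,u)$-square. On the pieces coming from $D_0$ every slice is an embedded arc of $D_0$, so the track is injective there, and pairs of points from the $D_0$-part and the $K$-part of the track can meet only at labels that vanish, since they lie on the same disk. I would then apply the excision computation of \cref{definition:dax-intersection-dbint} to conclude $\dax(L)=\dax(K)$.
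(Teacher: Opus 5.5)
Your overall plan is the paper's: form $H*F^{-1}$, use additivity, enlarge the result to a representative of an element of $\pi_3(M)$ by adding a region filled with slices of $D_0$, and conclude by excision. However, the enlargement is the only substantive step, and you leave it as ``bookkeeping''. Moreover, the mechanism you call the cleaner route fails. Let $\rho$ be a self-map of the $(t,u)$-square with $\rho(\partial I^2)$ in the top edge. Then $\rho$ has degree $0$ as a map $(I^2,\partial I^2)\to(I^2,\partial I^2)$, because $\rho_*[I^2,\partial I^2]$ factors through $H_2(I^2,\text{top edge})=0$. So the check you propose, that $\rho$ be a homeomorphism over the relevant region, cannot be arranged. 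Any part of the square that $\rho$ maps homeomorphically onto the region carrying the nontrivially labelled double points comes with further preimages of opposite local degree, and their pulled-back double points cancel its contribution. Conjugating by $E$ does not change this, since the face $t=0$ of $E*K*E^{-1}$ is still a nonconstant loop of arcs.

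Your fallback, a piecewise filling by slices of $D_0$, is the right idea, but the filling must be new data on a collar, not a precomposition. Switching to the paper's notation, let $L=H*F^{-1}$ (your $K$). Then $L(s,\mu)=D_0(s,\mu_1)$ for $\mu\in\partial I^2$. So the loop of boundary arcs depends only on the $t$-coordinate and factors through the interval of slices $t'\mapsto D_0(\cdot,t')$, inside which it contracts to $\zeta_+$. The paper places $L$, rescaled, on $[\tfrac13,\tfrac23]^2$. On the collar $c(\mu,v)=(1-\tfrac23 v)\mu+\tfrac13 v(1,1)$, with $\mu\in\partial I^2$ and $v\in I$, it sets $K(s,c(\mu,v))=D_0(s,(1-v)+v\mu_1)$; no $E$ is needed. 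Every collar slice is an embedded arc. Since the track identifies $x$ and $y$ only if $\tau(x)=\tau(y)$, no double point pairs a collar point with an inner point at all. So your justification ``since they lie on the same disk'' is unnecessary, and it is not valid as stated, since the inner part need not lie on $D_0$. After first arranging (D$_\mathrm{int}$) for $H$ and $F$ by the perturbation lemma, excision gives $\dax(H)-\dax(F)=\dax(L)=\dax(K)=d_{\zeta_+}([K])$.
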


\begin{proof}
  Let $H$,~$F\colon D_0 \simeq D_1$ rel~$\partial I^2$ be homotopies.
  By \cref{lemma:interior-perturb,theorem:dax-homotopy-invariance}, we may assume that $H$ and $F$ satisfy~\cref{item:dax-boundary-interior-condition}.
  Put $L=H*F^{-1}$.
  Then $L(s,t,u)=D_0(s,t)$ on~$\partial I^3$.
  Also, $\dax(L)=\dax(H)-\dax(F)$ by \cref{theorem:dax-additivity-inversion-composition}~\cref{item:dax-additivity,item:dax-inversion}.

  The map $L$ can be converted to a map $K\colon I^3 \to M$ representing an element of~$\pi_3(M)$.
  Specifically, let $\cP(M)$ be the space of paths in $M$ from $z_0$ to $z_1$, equipped with the compact-open topology, and consider the adjoint $K^{\ad}\colon I^2\to\cP(M)$ defined by $K^{\ad}(t,u)(s)=K(s,t,u)$.
  Then the following diagram depicts $K^{\ad}$ for the representative $K$ associated with~$L$.
  \[
    K^{\ad} = \vcenter{\hbox{
    \begin{tikzpicture}[
      x=1cm, y=1cm,
      line width=.6pt,
      edge/.style={-stealth,shorten <=2.5*\r,shorten >=2.5*\r},
    ]
      \small
      \def\W{3.7}\def\b{1}
      \def\r{1pt}
      \fill (0,0) circle(\r) (\W,0) circle(\r) (0,\W) circle(\r) (\W,\W) circle(\r)
        (\b,\b) circle(\r) (\W-\b,\b) circle(\r) (\b,\W-\b) circle(\r) (\W-\b,\W-\b) circle(\r);
      \tiny
      \draw [edge,-] (0,0)--(\W,0) node[midway,below]{$\zeta_+$};
      \draw [edge,-] (0,\W)--(\W,\W) node[midway,above]{$\zeta_+$};
      \draw [edge,-] (0,0)--(0,\W) node[midway,left]{$\zeta_+$};
      \draw [edge,-] (\W,0)--(\W,\W) node[midway,right]{$\zeta_+$};
      \draw [edge,-] (\b,\b)--(\b,\W-\b) node[midway,left]{$\zeta_-$};
      \draw [edge,-] (\W-\b,\b)--(\W-\b,\W-\b) node[midway,right]{$\zeta_+$};
      \foreach \i in {\b,\W-\b} {
        \draw [edge] (\b,\i)--(\W-\b,\i) node[midway,fill=white]{$D_0$};
      }
      \draw [edge] (\b,\b)--(0,0) node[midway,fill=white]{$D_0$};
      \draw [edge] (\b,\W-\b)--(0,\W) node[midway,fill=white]{$D_0$};
      \draw [edge] (\b,\b)--(\W,0) node[midway,fill=white]{$D_0$};
      \draw [edge] (\b,\W-\b)--(\W,\W) node[midway,fill=white]{$D_0$};
      \draw [edge,-] (\W-\b,\b)--(\W,0) node[midway,fill=white]{$\zeta_+$};
      \draw [edge,-] (\W-\b,\W-\b)--(\W,\W) node[midway,fill=white]{$\zeta_+$};
      \small
      \draw (.5*\W,.5*\W) node{$L$};
    \end{tikzpicture}
    }}
  \]
  For concreteness, we provide an explicit description.
  Let $c\colon (\partial I^2)\times I \to I^2\sm\inte([\tfrac13,\tfrac23]^2)$ be the collar homeomorphism defined by $c(\mu,v)=(1-\frac{2}{3}v)\mu+\frac 13 v(1,1)$ for $\mu=(\mu_1,\mu_2)\in\partial I^2$, $v\in I$.
  Under~$c$, $(\partial I^2)\times0$ and $(\partial I^2)\times1$ correspond to $\partial I^2$ and $\partial[\tfrac13,\tfrac23]^2$, respectively.
  Define $K\colon I^3\to M$ by
  \[
    K(s,\tau)=
    \begin{cases}
      L(s,3\tau-(1,1)), & \tau\in[\tfrac13,\tfrac23]^2,\\
      D_0(s,(1-v)+v\mu_1), & \tau=c(\mu,v).
    \end{cases}
  \]  Then one can verify that $K$ represents an element $[K] \in \pi_3(M)$.

  For every $\tau\in I^2\sm\inte([\tfrac13,\tfrac23]^2)$, the arc $s\mapsto K(s,\tau)$ is embedded.
  It follows that $K$ satisfies \cref{item:dax-boundary-interior-condition} since so does~$L$.
  Furthermore, for $g\ne 1$, the set $g_K^{-1}(g)$ lies over $\inte([\tfrac13,\tfrac23]^2)$ and corresponds to $g_L^{-1}(g)$ under the affine reparametrization that preserves the orientations and $\pi_1$ labels.
  Therefore, $\dax(K)=\dax(L)$ by excision.
  Now we have
  \[
    \dax(H)-\dax(F)=\dax(L)=\dax(K)=d_{\zeta_+}([K]). \qedhere
  \]
\end{proof}

\begin{theorem}
  \label{theorem:dax-disk-addivity-inversion}
  For disks $D_0$,~$D_1$,~$D_2$ pairwise homotopic rel~$\partial$, the following holds.
  \begin{enumerate}
    \item\label{item:dax-disk-addivity}
    $\Dax(D_0,D_2)=\Dax(D_0,D_1)+\Dax(D_1,D_2)$.
    \item\label{item:dax-disk-inversion}
    $\Dax(D_1,D_0)=-\Dax(D_0,D_1)$.
  \end{enumerate}
\end{theorem}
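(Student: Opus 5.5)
The plan is to reduce both parts directly to \cref{theorem:dax-additivity-inversion-composition}, using that $\Dax(D_i,D_j)$ may be computed from any homotopy rel~$\partial I^2$ from $D_i$ to $D_j$. This freedom is exactly what the preceding well-definedness lemma gives us. Everything takes place in the quotient $\Z[\pi_1(M)\sm 1]^\sigma/d_{\zeta_+}(\pi_3(M))$. Since the disks $D_0$, $D_1$, $D_2$ share boundary, the curves $\zeta_\pm$ and the basepoints $z_0$, $z_1$ are common to all three pairs. Hence the three invariants live in the same group.

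For \cref{item:dax-disk-addivity}, I would choose homotopies rel~$\partial I^2$: $H\colon D_0\simeq D_1$ and $F\colon D_1\simeq D_2$. Viewed as maps $I^3\to M$, they satisfy $H(s,t,1)=D_1(s,t)=F(s,t,0)$, so the concatenation $H*F$ is defined. I would check three things about $H*F$:
\begin{itemize}
  \item it is a homotopy rel~$\partial I^2$ from $D_0$ to $D_2$;
  \item it satisfies \cref{item:dax-boundary-condition}, as noted after the definition of concatenation;
  \item its reduced track has boundary equal to the embedded $2$-sphere $(D_0\times 0)\cup(\partial D_0\times I)\cup(D_2\times 1)$.
\end{itemize}
Given these, $H*F$ is an admissible choice in \cref{definition:dax-disks}, and \cref{theorem:dax-additivity-inversion-composition}~\cref{item:dax-additivity} gives $\dax(H*F)=\dax(H)+\dax(F)$. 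Passing to the quotient yields the identity.

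For \cref{item:dax-disk-inversion}, if $H\colon D_0\simeq D_1$ rel~$\partial I^2$, then $H^{-1}(s,t,u)=H(s,t,1-u)$ is a homotopy rel~$\partial I^2$ from $D_1$ to $D_0$, again satisfying \cref{item:dax-boundary-condition}. Then \cref{theorem:dax-additivity-inversion-composition}~\cref{item:dax-inversion} gives $\dax(H^{-1})=-\dax(H)$, hence $\Dax(D_1,D_0)=-\Dax(D_0,D_1)$. Alternatively, this part follows from \cref{item:dax-disk-addivity} together with $\Dax(D_0,D_0)=0$, computed using the constant homotopy. The constant homotopy has embedded arcs $s\mapsto D_0(s,t)$, so \cref{theorem:dax-vanishes-for-injective-track} applies to it.

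There is no real obstacle here. The only point needing care is to confirm that the concatenation uses the parametrization conventions of \cref{definition:dax-disks}: the $u$-coordinate is the homotopy parameter, which is exactly the coordinate along which $*$ and ${}^{-1}$ act. With that in place, the boundary conditions needed for \cref{definition:dax-disks} transfer verbatim.
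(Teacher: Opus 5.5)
Your proposal is correct and follows the paper's proof, which simply cites the additivity and inversion parts of \cref{theorem:dax-additivity-inversion-composition} applied to $H*F$ and $H^{-1}$. Your additional checks — the boundary conditions, and the appeal to the well-definedness lemma so that $H*F$ and $H^{-1}$ are admissible choices — are the details the paper leaves implicit.
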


\begin{proof}
  These are immediate consequences of \cref{theorem:dax-additivity-inversion-composition}~\cref{item:dax-additivity,item:dax-inversion}.
\end{proof}

\begin{theorem}
  \label{theorem:dax-disk-isotopy}
  The value of $\Dax(D_0,D_1)$ is invariant under isotopy of~$D_0$ and~$D_1$.
  If $D_0$ and $D_1$ are isotopic, then $\Dax(D_0,D_1)=0$.
\end{theorem}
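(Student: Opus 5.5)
By \cref{theorem:dax-disk-addivity-inversion}, it suffices to prove the second assertion: if $D_0$ and $D_1$ are isotopic rel boundary, then $\Dax(D_0,D_1)=0$. Indeed, suppose $D_0$ is isotopic to $D_0'$ and $D_1$ is isotopic to $D_1'$. All four disks are then pairwise homotopic rel~$\partial$, since an isotopy is in particular a homotopy rel~$\partial$. Additivity then gives
\[
  \Dax(D_0',D_1')=\Dax(D_0',D_0)+\Dax(D_0,D_1)+\Dax(D_1,D_1').
\]
The outer two terms vanish once the second assertion is established.

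To prove the second assertion, let $\{D_u\}_{u\in I}$ be an isotopy from $D_0$ to $D_1$. Recall that isotopies are always rel boundary and need not be locally flat. Composing with $\varphi$, define $H\colon I^3\to M$ by $H(s,t,u)=D_u(\varphi(s,t))$. This is a homotopy from $D_0$ to $D_1$ rel~$\partial I^2$, so we may use it to compute $\Dax(D_0,D_1)=\dax(H)$ by \cref{definition:dax-disks}. The preceding lemma shows this is independent of the choice of homotopy.

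The key observation is that $H$ satisfies the hypothesis of \cref{theorem:dax-vanishes-for-injective-track}. For each fixed $\tau=(t,u)\in I^2$, the arc $s\mapsto H(s,t,u)=D_u(\varphi(s,t))$ is the composition of the embedded leaf $s\mapsto\varphi(s,t)$ of the foliation of $D^2$ with the embedding $D_u$. Hence it is injective. This injectivity is all that \cref{theorem:dax-vanishes-for-injective-track} requires. Equivalently, the track $\overline H$ is injective on the compact space $I^3$ and is therefore an embedding; local flatness plays no role anywhere.

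One point deserves a check: \cref{theorem:dax-vanishes-for-injective-track} was stated under the standing assumption \cref{item:dax-boundary-condition}. Its proof, however, computes $\Phi_H^{-1}(\Delta)$ directly, while $\dax$ was defined via a perturbation $F$ satisfying \cref{item:dax-boundary-interior-condition}. I would verify that the perturbation of \cref{lemma:interior-perturb}, namely $F(x)=f_{\lambda(x)}(H(x))$ with each $f_t$ an embedding, preserves injectivity of each arc $s\mapsto F(s,\tau)$. Since $f_t$ is injective, $F(s,\tau)=F(s',\tau)$ forces $f_{\lambda(s,\tau)}(H(s,\tau))=f_{\lambda(s',\tau)}(H(s',\tau))$. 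The parameters $\lambda(s,\tau)$ and $\lambda(s',\tau)$ differ in general, so injectivity is not immediate. This is the main potential obstacle.

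To avoid it, I would apply the direct argument: $\overline H$ is injective, so for any $F$ that is sufficiently close to $H$ rel~$\partial I^3$, the sets $g_F^{-1}(g)$ with $g\ne1$ must lie near $\Phi_H^{-1}(\Delta)=\{(x,x)\}$, where the labels equal $1$. By the assertion in the proof of \cref{lemma:dax-small-homotopy-invariance}, applied with $C=P$, we get $G_F=\emptyset$ for such $F$. Since \cref{lemma:interior-perturb} provides arbitrarily small perturbations $F$, this yields $\dax(H)=\dax(F)=0$, completing the proof.
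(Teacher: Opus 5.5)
Your proposal is correct and follows the paper's proof. The first assertion reduces to the second by \cref{theorem:dax-disk-addivity-inversion}, and the second follows from \cref{theorem:dax-vanishes-for-injective-track} because the track of an isotopy is injective.

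The perturbation obstacle you raise does not arise. A map with injective track satisfies \cref{item:dax-boundary-interior-condition} vacuously, so $\dax(H)$ is computed directly by \cref{definition:dax-intersection-dbint} with $G_H=\emptyset$. Your workaround is valid but unnecessary.
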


We remark that neither the disks nor the isotopy in \cref{theorem:dax-disk-isotopy} are required to be locally flat.

\begin{proof}
  Suppose $H\colon D_0\approx D_1$ is an isotopy rel $\partial$.
  View $H$ as a homotopy $H\colon I^3 \to M$.
  Then, since $\overline H$ is an embedding, $\Dax(D_0,D_1)=\dax(H)=0$ by \cref{theorem:dax-vanishes-for-injective-track}.
  This shows the second assertion.
  The first assertion follows from this and \cref{theorem:dax-disk-addivity-inversion}.
\end{proof}

\subsubsection*{Comparison with the earlier developments for locally flat disks}

Let $\Gamma$ be an almost smooth structure of $M$ with singular point $p_0\in \inte M$, i.e., $\Gamma$ is a smooth structure on $M\sm p_0$.
In what follows, a topological neat disk $D\subset M$ is assumed to be smooth near $\partial D$ with respect to~$\Gamma$.

In~\cite[Definitions~4.6 and 4.13]{Cha-Kim:2023-1}, for \emph{locally flat} neat disks $D_0$,~$D_1\subset M$ such that $\partial D_0=\partial D_1$ and $D_0\simeq D_1$ rel~$\partial$, the invariant
\[
  \Dax_{M,\Gamma}(D_0,D_1)\in \Z[\pi_1(M)\sm 1]^\sigma / d_{M,\Gamma,\zeta_+}(\pi_3(M))
\]
is defined using stable disk smoothing and the smooth Dax invariant.
Here, $d_{M,\Gamma,\zeta_+}\colon\pi_3(M) \to \Z[\pi_1(M)\sm 1]^\sigma$ denotes the homomorphism given in~\cite[Lemma~4.12~(2)]{Cha-Kim:2023-1}, whose construction uses the almost smooth structure~$\Gamma$.
We begin by observing that this homomorphism can be reinterpreted by our topological method.

\begin{lemma}
  \label{lemma:identification-disk-dax-target}
  The homomorphism $d_{M,\Gamma,\zeta_+}$ in \cite[Lemma~4.12~(2)]{Cha-Kim:2023-1} agrees with our $d_{\zeta_+} = d_{M,\zeta_+}\colon \pi_3(M) \to \Z[\pi_1(M)\sm 1]^\sigma$ in \cref{definition:dax-homomorphism}.
  In particular, $d_{M,\Gamma,\zeta_+}$ is independent of the choice of~$\Gamma$.
\end{lemma}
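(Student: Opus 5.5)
The plan is to compare the two homomorphisms element by element: for each $\alpha\in\pi_3(M)$ we choose a single representative $K$ that both constructions can evaluate, and then show that the two evaluations of $K$ agree.

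First we recall how $d_{M,\Gamma,\zeta_+}$ is built in \cite[Lemma~4.12~(2)]{Cha-Kim:2023-1}. The arc $\zeta_+$ is neat, or lies in $\partial M$, and is smooth with respect to $\Gamma$. For $\alpha\in\pi_3(M)=\pi_3(M\sm p_0)$, one picks a representative $K\colon I^3\to M\sm p_0$ that agrees with $\zeta_+\times\id$ on $I\times\partial I^2$. One then applies a smooth Dax perturbation rel $\partial I^3$ inside the smooth manifold $(M\sm p_0,\Gamma)$, so that the track becomes generic. The value is the smooth count of double points; in the notation of that paper this is the invariant $\dax(K)$ of \cite[Definition~4.1]{Cha-Kim:2023-1}, computed in $M\sm p_0$ and pushed forward to $\pi_1(M)$.

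We can push representatives off $p_0$ because $p_0$ is an interior point and $I^3$ has dimension $3<4$. Hence $\pi_3(M\sm p_0)\to\pi_3(M)$ is surjective (indeed an isomorphism), and so is $\pi_1(M\sm p_0)\to\pi_1(M)$. So every $\alpha$ has a representative $K$ with image in $M\sm p_0$, and the class $[K]\in\pi_3(M)$ is unchanged.

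Next we show the two evaluations of such a $K$ coincide. Let $N=M\sm p_0$ with the smooth structure $\Gamma$. By \cref{corollary:dax-smooth-comparison} applied in $N$, the Cha--Kim value $\dax(K)$ equals our homological $\dax_N(K)$. The inclusion $f\colon N\hookrightarrow M$ is an orientation preserving embedding of pairs $(N,\partial N)\to(M,\partial M)$, and $f_*$ is injective on $\pi_1$. Therefore $f_\bullet=f_*$, and \cref{theorem:dax-additivity-inversion-composition}~\cref{item:dax-composition} gives $\dax_M(K)=f_*\dax_N(K)$. Since $\zeta_+$ is an embedding, $K$ is a representative in the sense of \cref{definition:dax-homomorphism}, so $\dax_M(K)=d_{M,\zeta_+}([K])$. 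Combining these equalities,
\[
d_{M,\Gamma,\zeta_+}(\alpha)=f_*\dax_N(K)=d_{M,\zeta_+}(\alpha).
\]
Independence from $\Gamma$ follows immediately, since the right-hand side does not involve $\Gamma$. The symmetric target $\Z[\pi_1(M)\sm 1]^\sigma$ is justified by \cref{corollary:dax-homomorphism-symmetry}: $\zeta_+$ lies in $\partial M$, as it is the boundary semicircle of the neat disk.

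The main obstacle is bookkeeping the precise definition in \cite{Cha-Kim:2023-1}. Its boundary condition uses $\zeta_+$, or possibly a smooth pushed-in version of $\zeta_+$. Its sign and ordering conventions must match \cref{lemma:dax-for-generic-track}, and its target is $\Z[\pi_1(M\sm p_0)\sm 1]$ mapped to $\Z[\pi_1(M)\sm 1]$. If a pushed-in arc is used, we first move the representative by a homotopy through maps satisfying \cref{item:dax-boundary-condition}, with embedded arcs on $\partial I^2$. By \cref{theorem:dax-vanishes-for-injective-track} and additivity, as in the well-definedness proof for $\Dax(D_0,D_1)$, such a collar region adds nothing to $\dax$.
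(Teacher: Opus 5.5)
Your proposal is correct and is essentially the paper's argument: lift to $\pi_3(M\sm p_0)$, identify the Cha--Kim value with the homological $\dax$ in $(M\sm p_0)_\Gamma$ via \cref{corollary:dax-smooth-comparison} (the paper cites the homomorphism-level \cref{corollary:dax-homomorphism-smooth-comparison}), and push forward along the inclusion using \cref{theorem:dax-additivity-inversion-composition}~\cref{item:dax-composition} with $f_\bullet=f_*$. One correction to a side remark: $\pi_3(M\sm p_0)\to\pi_3(M)$ is surjective but not an isomorphism, since the linking $3$-sphere of $p_0$ is nonzero in $H_3(M\sm p_0)$ (as $H_4(M)=0$) yet bounds a ball in $M$; this is why Cha--Kim need Lemma~4.12~(2) to pass to the quotient, but your element-wise argument uses only surjectivity and evaluation on a lift, so nothing breaks.
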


\begin{proof}
  Identify $\pi_1(M\sm p_0)$ with $\pi_1(M)$ under the inclusion-induced isomorphism.
  In~\cite[Definition~4.2]{Cha-Kim:2023-1}, a homomorphism $d_{M\sm p_0,\Gamma,\zeta_+}\colon \pi_3(M\sm p_0) \to \Z[\pi_1(M)\sm 1]^\sigma$ is defined for the smooth 4-manifold $(M\sm p_0)_\Gamma$, and \cite[Lemma~4.12~(2)]{Cha-Kim:2023-1} shows that $d_{M\sm p_0,\Gamma,\zeta_+}$ induces a homomorphism $d_{M,\Gamma,\zeta_+}$ on the quotient $\pi_3(M)$ of~$\pi_3(M\sm p_0)$.
  By \cref{corollary:dax-homomorphism-smooth-comparison}, $d_{M\sm p_0,\Gamma,\zeta_+}$ agrees with the map $d_{M\sm p_0,\zeta_+}$ in \cref{definition:dax-homomorphism} on~$\pi_3(M\sm p_0)$.
  Since $d_{M,\zeta_+}$ is induced by $d_{M\sm p_0,\zeta_+}$, we have $d_{M,\Gamma,\zeta_+} = d_{M,\zeta_+}$ on~$\pi_3(M)$.
\end{proof}

From \cref{lemma:identification-disk-dax-target}, it follows that $\Dax_{M,\Gamma}(D_0,D_1)$ in \cite{Cha-Kim:2023-1} and our invariant $\Dax(D_0,D_1)$ in \cref{definition:dax-disks} live in the same abelian group $\Z[\pi_1(M)\sm 1]^\sigma / d_{\zeta_+}(\pi_3(M))$.

\begin{theorem}
  \label{theorem:dax-disk-topological}
  $\Dax_{M,\Gamma}(D_0,D_1) = \Dax(D_0,D_1)$ in $ \Z[\pi_1(M)\sm 1]^\sigma / d_{\zeta_+}(\pi_3(M))$.
\end{theorem}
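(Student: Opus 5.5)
The plan is to unwind the definition of $\Dax_{M,\Gamma}(D_0,D_1)$ in \cite[Definitions~4.6 and 4.13]{Cha-Kim:2023-1} and to match each ingredient with our homological construction, using \cref{corollary:dax-smooth-comparison}, \cref{lemma:identification-disk-dax-target} and the isotopy invariance \cref{theorem:dax-disk-isotopy}.

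Recall the shape of the earlier construction. By the stable smoothing theorem \cite[Theorem~D]{Cha-Kim:2023-1}, after an isotopy (locally flat, rel boundary) the disks $D_0$, $D_1$ become smooth with respect to $\Gamma$ in $M\sm p_0$, possibly after a stabilization by connected sum with copies of $S^2\times S^2$ away from the disks. The invariant $\Dax_{M,\Gamma}(D_0,D_1)$ is then the smooth Dax invariant of a smooth homotopy $H$ rel $\partial$ between the smoothed disks, computed after Dax's generic perturbation. It lies in the smooth target, which is then identified with $\Z[\pi_1(M)\sm 1]^\sigma/d_{M,\Gamma,\zeta_+}(\pi_3(M))$.

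The proof then runs in three steps.

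\emph{Step 1: reduction to smooth disks.} If $D_i'$ denotes the smoothed disk isotopic to $D_i$, then \cref{theorem:dax-disk-isotopy} and \cref{theorem:dax-disk-addivity-inversion} give $\Dax(D_0,D_1)=\Dax(D_0',D_1')$ in our group. So it suffices to treat disks that are smooth in $M\sm p_0$ and admit a homotopy $H$ avoiding $p_0$, which can be arranged by general position of a 3-dimensional map in a 4-manifold.

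\emph{Step 2: comparison of the homotopy invariants.} For such $H$ in the smooth manifold $(M\sm p_0)_\Gamma$, \cref{corollary:dax-smooth-comparison} gives $\dax_{M\sm p_0}(H)$ equal to the smooth $\dax(H)$. Then \cref{theorem:dax-additivity-inversion-composition}~\cref{item:dax-composition}, applied to the inclusion $M\sm p_0\hookrightarrow M$, gives $\dax_M(H)=\dax_{M\sm p_0}(H)$, since the inclusion is a $\pi_1$-isomorphism and so $f_\bullet$ is the identity. Strictly speaking this inclusion is not a map of pairs $(M,\partial M)\to (N,\partial N)$ of the required form, but the same excision argument applies because $H$ avoids $p_0$.

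\emph{Step 3: matching the targets.} By \cref{lemma:identification-disk-dax-target}, the targets coincide.

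I expect the main obstacle to be the stabilization in Step 1. If the stable smoothing requires $\#k(S^2\times S^2)$, then the smooth invariant is computed in $M'=M\#k(S^2\times S^2)$, and \cite{Cha-Kim:2023-1} identifies the result with the unstabilized group. To handle this, I would first check how that identification is made. Then I would apply \cref{theorem:dax-additivity-inversion-composition}~\cref{item:dax-composition} to the embedding of the punctured $M$ into $M'$, together with the isotopy invariance in $M'$, to conclude that our invariant is unchanged under the stabilization. The main check is that the quotient groups and the map of $\pi_3$ used there agree with those induced by the embedding.
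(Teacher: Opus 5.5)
\textbf{There is a genuine gap.} Your skeleton is right: pass from smooth to homological via \cref{corollary:dax-smooth-comparison}, use naturality for a homotopy lying in a smaller manifold, and match targets via \cref{lemma:identification-disk-dax-target}. But the stabilization, which you postpone as the ``main obstacle'', is where the proof actually happens, and the route you sketch for it does not work.

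Step~1 is not available as stated. \cite[Theorem~D]{Cha-Kim:2023-1} is a \emph{stable} smoothing result: it makes $D_i$ isotopic to a smooth $D_i'$ only in $M'=(M\sm p_0)_\Gamma\csum k(S^2\times S^2)$. So $D_0'$, $D_1'$ are not disks in $M$, and $\Dax(D_0',D_1')$ does not live in our group. Using the quotient-level \cref{theorem:dax-disk-isotopy} in $M'$ gives only an equality modulo $d_{\zeta_+}(\pi_3(M'))$. That subgroup is in general strictly larger than $d_{\zeta_+}(\pi_3(M))$: the $S^2\times S^2$ summands add new classes to $\pi_3(M')$, such as Whitehead products of the new spheres with $\pi_1$-translates of each other, whose Dax invariants need not lie in $d_{\zeta_+}(\pi_3(M))$. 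So your final ``main check'', that the quotient groups and $\pi_3$-maps agree under the embedding, would fail. You cannot compare quotient-level invariants in $M'$ at all.

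\textbf{The missing idea.} Work with the unquotiented $\dax$ of the specific homotopy that defines the Cha--Kim invariant.
\begin{enumerate}
  \item Isotope $D_0$, $D_1$ off $p_0$ and the stabilization ball $B$. Choose a homotopy $H\colon D_0\simeq D_1$ in $M\sm(B\cup p_0)$ and isotopies $h_i\colon D_i\simeq D_i'$ in $M'$. By definition, $\Dax_{M,\Gamma}(D_0,D_1)$ is the smooth $\dax$ of $F=h_0^{-1}*H*h_1$, reduced modulo $d_{\zeta_+}(\pi_3(M))$. The indeterminacy is controlled exactly because the middle piece lives in $M$.
  \item By \cref{corollary:dax-smooth-comparison}, the smooth $\dax$ of $F$ equals our $\dax_{M'}(F)$.
  \item By \cref{theorem:dax-additivity-inversion-composition}, $\dax_{M'}(F)=-\dax(h_0)+\dax_{M'}(H)+\dax(h_1)$ in $\Z[\pi_1(M)\sm 1]$ itself, with no quotient taken.
  \item Each $\dax(h_i)$ is $0$ on the nose by \cref{theorem:dax-vanishes-for-injective-track}, since an isotopy has an embedded track. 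This homotopy-level vanishing, not \cref{theorem:dax-disk-isotopy}, is what is needed.
  \item $\dax_{M'}(H)=\dax_M(H)$, because $H$ lies in the common subset $M\sm(B\cup p_0)$ and the same $V_g$ compute both.
\end{enumerate}
Since $\dax_M(H)$ represents $\Dax(D_0,D_1)$, the theorem follows without ever comparing quotient groups for $M$ and $M'$.

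A minor point: your worry in Step~2 is unnecessary. $M\sm p_0\hookrightarrow M$ is a map of pairs of the required form, because $p_0$ is an interior point.
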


\begin{proof}
  We briefly review the definition of $\Dax_{M,\Gamma}(D_0,D_1)$ in \cite{Cha-Kim:2023-1}.
  Fix a small smooth 4-ball $B$ in the smooth 4-manifold $(M\sm p_0)_\Gamma$.
  By topological isotopy, we may assume that $D_0$, $D_1 \subset M \sm (B\cup \{p_0\})$.
  By \cite[Theorem~D]{Cha-Kim:2023-1}, for some $k\ge 0$, the disk $D_i$ ($i=0,1$) is topologically isotopic to a smooth disk $D_i'$ in the smooth 4-manifold $M' = (M \sm \{p_0\})_\Gamma \csum k(S^2\times S^2)$, where the connected sum is formed at~$B$.
  Choose a homotopy $H\colon D_0 \simeq D_1$ rel $\partial$ in $M$ that avoids $p_0$ and $B$, and choose topological isotopies $h_i \colon D_i \simeq D_i'$ rel~$\partial$ in~$M'$.
  View $H$ and $h_i$ as maps $I^3\to M'$.
  Let $F = h_0^{-1} * H * h_1$ be the concatenation, which is a homotopy $D_0'\simeq D_1'$ rel~$\partial$ in~$M'$.
  Then, $\Dax_{M,\Gamma}(D_0,D_1)$ is defined to be the smooth Dax self-intersection of $F$ modulo $d_{\zeta_+}(\pi_3(M))$ \cite[Definitions~4.13, 4.6 and~4.1]{Cha-Kim:2023-1};
  its value is identical with our $\dax(F)$ in \cref{definition:dax-intersection-dbint,definition:dax-intersection}, by \cref{corollary:dax-smooth-comparison}.

  By \cref{theorem:dax-additivity-inversion-composition},
  \[
    \dax(F)=-\dax(h_0) + \dax(H) + \dax(h_1) \quad\text{in}\quad \Z[\pi_1(M')\sm 1]^\sigma =\Z[\pi_1(M)\sm 1]^\sigma,
  \]
  where $\dax(H)$ is computed in~$M'$.
  By \cref{theorem:dax-vanishes-for-injective-track}, $\dax(h_i)=0$ for $i=0,1$.
  Since $H$ can be viewed as a homotopy in $M$, $\dax(H)$ can also be computed in $M$ to yield the same value.
  Thus,
  \[
    \Dax_{M,\Gamma}(D_0,D_1)=\dax(H)=\Dax(D_0,D_1) \quad\text{in}\quad \Z[\pi_1(M)\sm 1]^\sigma / d_{\zeta_+}(\pi_3(M)).
    \qedhere
  \]
\end{proof}

\Cref{corollary:dax-disk-smooth-structure-independence} is an immediate consequence of \cref{theorem:dax-disk-topological}.

\subsubsection*{Non-locally flat isotopy and locally flat isotopy}

Recall that a neat disk $D\subset M$ has a \emph{boundary geometric dual} if there is a map $G\colon S^2\to \partial M$ which intersects $\partial D$ at exactly one point, transversely.
Under the presence of a boundary geometric dual, the invariance of the Dax invariant under non-locally flat isotopy has the following consequence.

\begin{corollary}
  \label{corollary:disk-isotopy-implies-locally-flat-isotopy}
  Let $M$ be an orientable topological 4-manifold with nonempty boundary.
  For locally flat neat disks in $M$ admitting a boundary geometric dual, isotopy implies locally flat isotopy.
  If $M$ is smooth, then for smooth neat disks in $M$ admitting a geometric dual in the boundary, topological isotopy implies smooth isotopy.
\end{corollary}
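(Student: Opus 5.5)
The plan is to combine \cref{theorem:dax-disk-isotopy}, which holds for arbitrary (not necessarily locally flat) isotopies, with the results of~\cite{Cha-Kim:2023-1} on locally flat disks that have a boundary geometric dual. Let $D_0$ and $D_1$ be locally flat neat disks in $M$ with a boundary geometric dual, and suppose they are isotopic rel~$\partial$ through a possibly non-locally flat isotopy. By \cref{theorem:dax-disk-isotopy}, $\Dax(D_0,D_1)=0$. Since an isotopy is in particular a homotopy rel~$\partial$, $D_0$ and $D_1$ are homotopic rel boundary. So the invariant $\Dax_{M,\Gamma}(D_0,D_1)$ of~\cite{Cha-Kim:2023-1} is defined for a suitable almost smooth structure~$\Gamma$, after first moving the disks near the boundary by a locally flat isotopy so that they are smooth near $\partial$ with respect to~$\Gamma$.

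Next, \cref{theorem:dax-disk-topological} gives $\Dax_{M,\Gamma}(D_0,D_1)=\Dax(D_0,D_1)=0$. I would then invoke the main isotopy theorem of~\cite{Cha-Kim:2023-1}. For locally flat neat disks in $M$ that are homotopic rel~$\partial$ and have a common boundary geometric dual, that theorem says: they are locally flat isotopic rel~$\partial$ if and only if $\Dax_{M,\Gamma}$ vanishes. This is the topological analogue of Gabai's light bulb theorem for disks. Applying it yields the locally flat isotopy.

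For the smooth statement, I would run the same argument in smooth $M$. By \cref{corollary:dax-smooth-comparison} and \cref{theorem:dax-disk-topological}, or directly from the smooth definition, the smooth Dax invariant of~\cite{Gabai:2021-1,Kosanovic-Teichner:2024-2} equals $\Dax(D_0,D_1)=0$ modulo $d_{\zeta_+}(\pi_3(M))$. The smooth theorem of Gabai and Kosanovi\'c--Teichner for disks with a dual in the boundary then gives a smooth isotopy.

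The main obstacle is bookkeeping: matching hypotheses and target groups. Two points need checking. First, the isotopy theorem of~\cite{Cha-Kim:2023-1} must be stated with the same quotient $\Z[\pi_1(M)\sm 1]^\sigma/d_{\zeta_+}(\pi_3(M))$; \cref{lemma:identification-disk-dax-target} handles this. Second, a boundary geometric dual for one disk serves as a dual for both, since $\partial D_0=\partial D_1$. I would also remark that we may take $M$ connected and choose $\Gamma$ with its singular point away from everything relevant. Existence of an almost smooth structure on a connected 4-manifold with nonempty boundary is standard (\cite[Theorem~8.2]{Freedman-Quinn:1990-1}).
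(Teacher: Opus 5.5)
Your proposal is correct. For the locally flat statement it is the paper's argument: \cref{theorem:dax-disk-isotopy} gives $\Dax(D_0,D_1)=0$. Then \cref{theorem:dax-disk-topological}, with \cref{lemma:identification-disk-dax-target} matching the target groups, gives $\Dax_{M,\Gamma}(D_0,D_1)=0$, and \cite[Corollary~H]{Cha-Kim:2023-1} produces a locally flat isotopy.

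For the smooth statement your route is different.
\begin{itemize}
\item The paper chains two results. Smooth disks are locally flat, so the first part already gives a locally flat isotopy. Then \cite[Corollary~A]{Cha-Kim:2023-1} upgrades a locally flat isotopy of smooth disks with a boundary dual to a smooth isotopy.
\item You go directly from $\Dax(D_0,D_1)=0$ to the vanishing of the smooth disk invariant, using \cref{corollary:dax-smooth-comparison,corollary:dax-homomorphism-smooth-comparison}. You then apply the smooth light bulb theorem for disks of Gabai and Kosanovi\'c--Teichner.
\end{itemize}
Your version never passes through a locally flat isotopy, so in the smooth case it needs neither Corollary~H nor Corollary~A of \cite{Cha-Kim:2023-1}. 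It also makes plain that the only new input is invariance under non-locally flat isotopy plus the comparison with the smooth double-point count. The paper's version is shorter because this bookkeeping is hidden inside the cited corollaries.

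If you write yours out, check two small points:
\begin{itemize}
\item The smooth invariant in the light bulb theorem must be the one computed as $\dax(H)$ for a smooth homotopy $H$ rel~$\partial$, as in \cite[Definition~4.6]{Cha-Kim:2023-1}, so that \cref{corollary:dax-smooth-comparison} applies.
\item The smooth theorem wants an embedded dual sphere in $\partial M$; \cite[Lemma~2.7]{Cha-Kim:2023-1} supplies one.
\end{itemize}
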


\begin{proof}
  Suppose that $D_0$ and $D_1$ are locally flat disks which are isotopic in $M$ and admit a boundary geometric dual.
  By \cref{theorem:dax-disk-isotopy}, $\Dax(D_0,D_1)=0$.
  Since $D_0$ and $D_1$ are homotopic rel~$\partial$, and since our $\Dax(D_0,D_1)$ agrees with that in~\cite{Cha-Kim:2023-1} by \cref{theorem:dax-disk-topological}, this implies that $D_0$ and $D_1$ are locally flat isotopic by~\cite[Corollary~H]{Cha-Kim:2023-1}.
  The smooth isotopy conclusion readily follows from this by~\cite[Corollary~A]{Cha-Kim:2023-1}.
\end{proof}

We record another application of the Dax invariant of disks.
In what follows, local knotting means adding a locally flat spherical to a locally flat surface, or a smooth spherical knot to a smooth surface.

\begin{corollary}
  \label{corollary:local-knotting-implies-locally-flat-isotopy}
  Let $\Sigma$ be a connected, locally flat neat surface in an orientable topological 4-manifold~$M$ admitting a boundary geometric dual.
  Then every surface obtained by local knotting on $\Sigma$ is ambient isotopic to~$\Sigma$.
  If $M$ is smooth and $\Sigma$ is smoothly embedded, then surfaces obtained by smooth local knotting on $\Sigma$ are smoothly isotopic.
\end{corollary}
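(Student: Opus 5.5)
The plan is to reduce the corollary to \cref{corollary:disk-isotopy-implies-locally-flat-isotopy}, applied to a disk inside $\Sigma$. Let $\Sigma'$ be obtained from $\Sigma$ by local knotting: inside a small ball $B\subset \inte(M)$ meeting $\Sigma$ in a trivial disk $\Delta$, we replace $\Delta$ by $\Delta\csum K$ for a locally flat (or smooth) knotted $2$-sphere $K$. By shrinking $K$ to a point, i.e.\ pulling the knot tight as in the Mazur-type swindle, we get a continuous but non-locally flat isotopy from $\Sigma'$ to $\Sigma$ supported in $B$. Concretely, on $B\cong D^4$ we use the radial contraction $x\mapsto (1-v)x$ applied to the knotted part, coned toward a point of $\Delta$. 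This gives a continuous family of embeddings ending at a surface that is locally flat except at one point, which is then pushed back to $\Delta$. So $\Sigma$ and $\Sigma'$ are isotopic in the sense of this paper, and the whole task is to upgrade this to a locally flat (respectively smooth) isotopy.

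Next I would reduce to disks. Choose an arc system on $\Sigma$ to cut it into a disk $D\subset\Sigma$ that contains $\Delta$ in its interior and meets $\partial\Sigma$ in an arc of $\partial\Sigma$. Take a regular neighborhood $N$ of $\Sigma\sm \inte D$ in $M$, chosen disjoint from $B$, and let $M_0=\overline{M\sm N}$, which is orientable. Then $D$ and $D'=(D\sm\Delta)\cup(\Delta\csum K)$ are neat disks in $M_0$ with common boundary, and the shrinking isotopy takes place inside $B\subset M_0$, so $D$ and $D'$ are isotopic in $M_0$. The boundary geometric dual $G$ of $\Sigma$ meets $\partial\Sigma$ once. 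After sliding the transverse intersection point into the arc $\partial D\cap\partial M$, and tubing $G$ along $\partial N$ if necessary, we obtain a boundary geometric dual to $D$ in $\partial M_0$. Now \cref{corollary:disk-isotopy-implies-locally-flat-isotopy} gives a locally flat isotopy $D\simeq D'$ rel~$\partial$ in $M_0$, and in the smooth case a smooth one. Gluing this with the identity on $\Sigma\sm D$ gives a locally flat isotopy of $\Sigma'$ to $\Sigma$. By the isotopy extension theorem of Edwards--Kirby (or the smooth version) it becomes an ambient isotopy. Nonorientability of $\Sigma$ causes no trouble, since only $M_0$ needs to be orientable.

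I expect the main obstacle to be the dual-transfer step: ensuring that $\partial M_0$ actually contains a sphere meeting $\partial D$ exactly once transversely. The new boundary $\partial M_0$ includes part of $\partial N$, and $G$ may meet $\partial\Sigma$ at a point outside $D$. My first attempt is to choose the cutting arcs to end on $\partial\Sigma$ away from $G\cap\partial\Sigma$, so that this point lies on $\partial D$. Then $G\subset\partial M$ avoids $N\cap\partial M$ apart from that single point, and $G$ survives in $\partial M_0$. A secondary check is that the shrinking isotopy really is a continuous family of embeddings. This holds because the contraction is injective at each time $v<1$, and at $v=1$ the knotted part collapses to a single point while the rest of the map stays injective.
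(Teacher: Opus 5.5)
Your proposal is correct and follows essentially the paper's proof. You choose a disk $D\subset\Sigma$ whose arc $\partial D\cap\partial M$ contains the point where the dual meets $\partial\Sigma$, so the dual survives in the boundary of the exterior $X$ of a normal bundle of $\Sigma\sm D$; you then apply \cref{corollary:disk-isotopy-implies-locally-flat-isotopy} in $X$ to $D$ and its locally knotted copy, extend by the identity on $\Sigma\sm D$, and finish with Edwards--Kirby, and your tubing fallback is not needed. One small fix to the shrinking isotopy: shrink the domain subdisk together with its image, e.g.\ $e_v(z)=(1-v)\,e'(z/(1-v))$ for $|z|\le 1-v$ and $e_v(z)=z$ otherwise, where $e'$ parametrizes the knotted disk and $\Delta$ is flat through the center of $B$. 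Then every $e_v$ is an embedding, $e_1$ is just $\Delta$, and no ``push back'' step is required; a map that collapses a subdisk to a point, as your final check describes, would not be an embedding.
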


\begin{proof}
  Suppose $\Sigma'\subset M$ is obtained from $\Sigma$ by local knotting.
  Fix a disk $D\subset \Sigma$ that intersects $\partial \Sigma$ in a semicircle of~$\partial D$, and let $X=\overline{M\sm \nu\Sigma|_{\Sigma\sm D}}$, the exterior of a topological normal bundle of~$\Sigma\sm D$.
  Note that $D\subset X$ is a neat disk,
  We may assume that
  $\Sigma'=(\Sigma\sm D)\cup D'$ where $D'=\Sigma'\cap X$ is obtained from $D$ by local knotting.
  We may also assume that a boundary geometric dual of $\Sigma$ meets~$D$ so that it is also a boundary geometric dual of~$D$.
  Since $D$ and $D'$ differ by local knotting, they are isotopic, and thus locally flat isotopic in $X$ by~\cref{corollary:disk-isotopy-implies-locally-flat-isotopy}.
  Extend it by the identity on $\Sigma\sm D$, to obtain a locally flat isotopy between $\Sigma$ and $\Sigma'$ in~$M$.
  By~\cite{Edwards-Kirby:1971-1}, $\Sigma$ and $\Sigma'$ are ambient isotopic.
  In the smooth case, the same argument works by using the smooth part of ~\cref{corollary:disk-isotopy-implies-locally-flat-isotopy}.
\end{proof}

\bibliographystyle{amsalpha}
\def\MR#1{}
\bibliography{research}

\end{document}